\theoremstyle{plain}
\newcommand{\cleqn}{\setcounter{equation}{0}}
\newcommand{\clth}{\setcounter{theorem}{0}}
\newcommand {\sectionnew}[1]{\section{#1}\cleqn\clth}
\newtheorem{theorem}{Theorem}[section]
\newtheorem{lemma}[theorem]{Lemma}
\newtheorem{definition-theorem}[theorem]{Definition-Theorem}
\newtheorem{proposition}[theorem]{Proposition}
\newtheorem{corollary}[theorem]{Corollary}
\newtheorem{example}[theorem]{Example}
\newtheorem{conjecture}[theorem]{Conjecture}
\theoremstyle{definition}
\newtheorem{remark}[theorem]{Remark}
\newtheorem{definition}[theorem]{Definition}
\newtheorem*{remark*}{Remark}
\newtheorem{question}[theorem]{Question}
\newtheorem{notation}[theorem]{Notation}
\newcommand \bth[1] { \begin{theorem}\label{t#1} }
\newcommand \ble[1] { \begin{lemma}\label{l#1} }
\newcommand \bpr[1] { \begin{proposition}\label{p#1} }
\newcommand \bco[1] { \begin{corollary}\label{c#1} }
\newcommand \bde[1] { \begin{definition}\label{d#1}\rm }
\newcommand \bex[1] { \begin{example}\label{e#1}\rm }
\newcommand \bre[1] { \begin{remark}\label{r#1}\rm }
\newcommand \bcj[1] { \begin{conjecture}\label{j#1}\rm }
\newcommand \bnota[1] { \begin{notation}\label{n#1}\rm }
\renewcommand {\eth} { \end{theorem} }
\newcommand {\ele} { \end{lemma} }
\newcommand {\epr} { \end{proposition} }
\newcommand {\eco} { \end{corollary} }
\newcommand {\ede} { \end{definition} }
\newcommand {\eex} { \end{example} }
\newcommand {\ere} { \end{remark} }
\newcommand {\ecj} { \end{conjecture} }
\newcommand {\enota} { \end{notation} }
\newcommand \Aset {{\mathbb A}}        %mathsets
\newcommand \KK {{\mathbb K}}
\newcommand \Zset {{\mathbb Z}}
\newcommand \OO {{\mathcal{O}}}
\newcommand \de {\delta}
\newcommand \al {\alpha}
\newcommand \la {\lambda}
\newcommand \sig {\sigma}
\newcommand \id { {\mathrm{id}} }
\DeclareMathOperator \gr  { {\mathrm{gr}} }
\DeclareMathOperator \Fract { {\mathrm{Fract}} }
\newcommand\kx{\KK^*}
\newcommand\HH{{\mathcal{H}}}
\DeclareMathOperator \Spec {Spec}
\newcommand \Znn {\Zset_{\ge 0}}
\newcommand \Rhat {\widehat{R}}
\newcommand\Hspec{\HH{\text{-}}\Spec}
\newcommand{\gc}{ [ \hspace{-0.65mm} [}
\newcommand{\dc}{]  \hspace{-0.65mm} ]}
\newcommand \bfq {\mathbf q}
\newcommand \bfla {\boldsymbol \lambda}
\DeclareMathOperator\height{ht}
\DeclareMathOperator\GK{GKdim}
\begin{document}

\title{Catenarity in quantum nilpotent algebras}

\author[K. R. Goodearl]{K. R. Goodearl}
\address{
Department of Mathematics \\
University of California\\
Santa Barbara, CA 93106 \\
U.S.A.
}
\email{goodearl@math.ucsb.edu}

\author[S. Launois]{S. Launois}
\address{
School of Mathematics, Statistics and Actuarial Science \\
University of Kent \\
Canterbury, Kent, CT2 7FS \\
 UK
}
\email{S.Launois@kent.ac.uk}

\begin{abstract}
In this paper, it is established that quantum nilpotent algebras (also known as CGL extensions) are catenary, i.e., all saturated chains of inclusions of prime ideals between any two given prime ideals $P \subsetneq Q$ have the same length. This is achieved by proving that the prime spectra of these algebras have normal separation, and then establishing the mild homological conditions necessary to apply a result of Lenagan and the first author. The work also recovers the Tauvel height formula for quantum nilpotent algebras, a result that was first obtained by Lenagan and the authors through a different approach.
\end{abstract}

\subjclass[2010]{Primary 16T20;  Secondary16D25, 16P40, 16S36, 20G42}

\keywords{Catenary, quantum nilpotent algebra, CGL extension, height formula}

\thanks{The research of the first named author was supported
by US National Science Foundation grant DMS-1601184. That of the second named author was supported by EPSRC grant EP/N034449/1.}

\maketitle

%%%%%%%%%%%%%%%%%%%%%%%%%%

\sectionnew{Introduction}

The aim of this paper is to study the prime spectra of quantum algebras. More precisely, we focus on the catenary property -- that all saturated chains of inclusions of prime ideals between any two fixed prime ideals have the same length -- for a large class of (quantum) algebras called quantum nilpotent algebras. Examples of these algebras include for instance quantum matrices and more generally quantum Schubert cells. Quantum nilpotent algebras have also appeared in the literature under the name ``CGL extensions", and their prime spectra have been proved in some cases to be linked to totally nonnegative matrix varieties; see for instance \cite{gll1,gll2,LLN} for more details.
\medskip

A fundamental property of any affine algebraic variety $V$ is that all saturated chains of inclusions of irreducible subvarieties of $V$ between any two fixed irreducible subvarieties have the same length.  Restated in terms of the coordinate ring $\OO(V)$, this says that the prime spectrum of $\OO(V)$ is catenary.

Quantized coordinate rings of affine varieties are expected to enjoy suitable versions of the properties of their classical counterparts. In particular, it is conjectured that the prime spectra of quantized coordinate rings must be catenary.  This conjecture has been verified for the quantized coordinate rings of many varieties, such as matrix varieties \cite{Ca2}, affine spaces and general and special linear groups \cite{GLen}, simple algebraic groups \cite{GZ, Yak3}, Schubert cells \cite{Yak1}, and Grassmannians \cite{LLR2}. Catenarity has also been established for many related quantum algebras, such as uni- and multiparameter quantum symplectic and euclidean spaces \cite{Oh1, Hrt}, quantized Weyl algebras \cite{GLen, Oh1}, and twisted quantum Schubert cell algebras \cite{Yak2}. The above references deal with generic quantum algebras, those whose quantum parameters are non-roots of unity. When the quantum parameters are roots of unity, such algebras satisfy polynomial identities, and catenarity of affine polynomial identity algebras follows from a result of Schelter \cite[Theorem 1]{Sch}.

Here we establish catenarity for all members of the broad family of quantum nilpotent algebras (defined below). These algebras and localizations thereof cover the generic quantum algebras mentioned above, except for quantized coordinate rings of simple algebraic groups and Grassmannians. 
\medskip

By a famous result of Gabber, enveloping algebras of finite dimensional solvable Lie algebras are catenary (see, e.g., \cite{Gab} or a combination of \cite[Appendix Al]{LevSt} and \cite[Ch. 9]{KrLen}). This result was extended to enveloping algebras of finite dimensional solvable Lie superalgebras by Lenagan \cite{Len}. The method of proof involved establishing good homological properties of the ring, connecting homological properties with growth, and controlling growth properties of prime factors by finding normal elements. (A \emph{normal element} in a ring $R$ is an element $x$ such that $xR=Rx$.) Abstracting these methods, Lenagan and the first author gave a set of homological and ring-theoretical conditions that ensure catenarity of an algebra 
\cite[Theorem 7.1]{GLen}. The method additionally yields the following useful height formula, first established by Tauvel \cite{tauvel} for enveloping algebras of solvable Lie algebras:
$$
\height(P) + \GK(R/P) = \GK(R) \qquad \text{for all prime ideals}\; P\; \text{of}\; R.
$$
This formula has been proved for many quantum algebras such as the ones mentioned above, and Lenagan and the present authors recently proved that all quantum nilpotent algebras satisfy Tauvel's height formula \cite{GLL}.

In order to apply the above methods to an algebra $R$, a suitable supply of normal elements in prime factor algebras is needed, in the following form. The prime spectrum $\Spec R$ must have \emph{normal separation}, meaning that for any pair of distinct comparable prime ideals $P \subsetneq Q$ in $R$, the factor $Q/P$ contains a nonzero normal element of $R/P$. Normal separation was proved by Cauchon for quantum matrices \cite{Ca2} using ring-theoretical and combinatorial methods. Later, Yakimov established it for quantum Schubert cells \cite{Yak1} using representation theoretical methods. Here we prove it for a larger class of algebras using purely ring-theoretical methods.

Establishing normal separation for quantum nilpotent algebras requires most of the effort in the paper, since existing results can be applied to verify the required homological properties.

\subsection{Quantum nilpotent algebras}
Let  $R$ an iterated skew polynomial algebra of the form
\begin{equation}  \label{itOre}
R\ = \ \KK[x_1][x_2;\sigma_2,\delta_2]\cdots[x_N;\sigma_N,\delta_N],
\end{equation}
over a field $\KK$,
 where $\sigma_j$ is an automorphism of the $\KK$-algebra 
 $$
R_{j-1} :=\KK[x_1][x_2;\sigma_2,\delta_2]\dots[x_{j-1};\sigma_{j-1},\delta_{j-1}]
$$
 and  $\delta_j$ is a $\KK$-linear $\sigma_j$-derivation of
 $R_{j-1}$, for all $j\in \gc 2 ,N \dc$. (When needed, we denote $R_0 := \KK$ and set $R_1 = \KK[x_1; \sigma_1, \delta_1]$ with $\sigma_1 := \id_\KK$, $\delta_1 := 0$.) In particular, $R$ and the $R_j$ are noetherian domains. 
 
 \begin{definition}  \label{CGLdef}
An iterated skew polynomial extension $R$ as in \eqref{itOre} is called a \emph{quantum nilpotent algebra} or a \emph{CGL extension}
\cite[Definition 3.1]{LLR} if it is 
equipped with a rational action of a $\KK$-torus $\HH = (\kx)^d$ 
by $\KK$-algebra automorphisms satisfying the following conditions:
\begin{enumerate}[(i)]
\item  The elements $x_1, \ldots, x_N$ are $\HH$-eigenvectors.
\item For every $j \in \gc 2,N \dc$, $\delta_j$ is a locally nilpotent 
$\sigma_j$-derivation of $R_{j-1}$. 
\item For every $j \in \gc 1,N \dc$, there exists $h_j \in \HH$ such that $(h_j\cdot)|_{R_{j-1}} = \sigma_j$ and
$h_j \cdot x_j = q_j x_j$ for some $q_j \in \kx$ which is 
not a root of unity. 
\end{enumerate}
(We have omitted the condition $\sigma_j  \delta_j = q_j \delta_j \sigma_j$ from the original definition, as it follows from the other conditions; see, e.g., \cite[Eq. (3.1); comments, p.694]{GYncufd}.) 
\end{definition}

The main theorem of the paper is

\begin{theorem}  \label{catCGL}
If $R$ is a quantum nilpotent algebra, then $\Spec R$ is catenary, and all prime quotients of $R$ satisfy Tauvel's height formula.
\end{theorem}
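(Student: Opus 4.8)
The plan is to derive the theorem from the catenarity criterion of Lenagan and the first author, \cite[Theorem 7.1]{GLen}: a noetherian affine $\KK$-algebra $R$ of finite GK dimension is catenary and satisfies Tauvel's height formula $\height(P)+\GK(R/P)=\GK(R)$ as soon as $R$ is Auslander--Gorenstein, GK--Cohen--Macaulay, and $\Spec R$ is normally separated. Consequently it suffices to verify these hypotheses for a quantum nilpotent algebra $R$. The homological conditions are comparatively routine, while normal separation carries the real weight; both outputs of the criterion (catenarity and Tauvel's formula) then follow simultaneously.

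For the homological side I would filter $R$ using its iterated skew polynomial presentation \eqref{itOre}, choosing the filtration so that the associated graded ring $\gr R$ is the quantum affine space $\KK_{\bfq}[X_1,\dots,X_N]$ obtained by deleting the derivations $\delta_j$ and retaining the commutation data of the $\sigma_j$. A quantum affine space is a noetherian Auslander--regular, GK--Cohen--Macaulay domain with $\GK=N$; these are standard facts for iterated Ore extensions of pure automorphism type. The Auslander--Gorenstein property and the GK--Cohen--Macaulay equality $j(M)+\GK(M)=\GK(R)$ then transfer from $\gr R$ to $R$ by the usual filtered-to-graded comparison of grade and GK dimension, and $\GK R=N<\infty$. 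This settles the homological hypotheses without new ideas.

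The main obstacle is normal separation, and I would reduce it to the finite $\HH$-stratification of $\Spec R$. For a prime $P$, write $J_P:=\bigcap_{h\in\HH}h\cdot P$ for the $\HH$-prime below it; for a comparable pair $P\subsetneq Q$ one has $J_P\subseteq J_Q$. The basic observation is that an $\HH$-eigenvector of $R$ lies in $P$ if and only if it lies in $J_P$, since $h\cdot P$ contains a given eigenvector exactly when $P$ does. Two cases arise. If $J_P\subsetneq J_Q$, it is enough to produce a nonzero normal $\HH$-eigenvector $u\in J_Q\setminus J_P$ of $R/J_P$: by the observation $u\notin P$, while $u\in J_Q\subseteq Q$, and the image of the normal element $u$ in the further quotient $R/P$ is the desired nonzero normal element of $Q/P$. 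If instead $J_P=J_Q$, so that $P$ and $Q$ lie in a common stratum, the normal element is supplied by the commutative Laurent-polynomial description of that stratum: after inverting the $\HH$-eigenvectors of $R/J_P$ the factor becomes controlled by its center, and a central element separating the contractions of $P$ and $Q$ clears denominators to a normal element of $R/P$ inside $Q/P$.

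Everything therefore funnels into the single hard statement that comparable $\HH$-primes are separated by normal $\HH$-eigenvectors: for $\HH$-primes $J_1\subsetneq J_2$, the ideal $J_2/J_1$ contains a nonzero normal $\HH$-eigenvector of $R/J_1$, which (passing to a covering step $J_1\subsetneq J'\subseteq J_2$) I would attack by constructing such eigenvectors directly from the CGL structure---for instance through the deleting-derivations embedding of $R$ into a quantum affine space, which transports normal elements and is compatible with the stratifications, or via a supply of homogeneous normal elements read off from the presentation \eqref{itOre}. I expect this construction, together with the bookkeeping needed to pass from $\HH$-primes to arbitrary primes, to absorb essentially all the difficulty; granting it, \cite[Theorem 7.1]{GLen} applies and delivers both catenarity and Tauvel's height formula at once.
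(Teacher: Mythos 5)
Your overall framework coincides with the paper's first proof: verify the hypotheses of the Goodearl--Lenagan criterion by (i) filtering $R$ so that $\gr R$ is a quantum affine space, giving Auslander-regularity, the Cohen--Macaulay condition, and $\GK R = N$, and (ii) establishing normal separation of $\Spec R$ by reducing it, via the $\HH$-stratification, to normal $\HH$-separation of $\Hspec R$. Your two-case reduction (the case $(P:\HH)\subsetneq(Q:\HH)$ handled by an $\HH$-eigenvector, which automatically avoids $P$ because an $\HH$-eigenvector lies in $P$ iff it lies in $(P:\HH)$; the case of a common stratum handled by the commutative Laurent structure of the stratum) is exactly the content of \cite[Theorem 5.3]{murcia.proc}, which the paper simply cites. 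So the skeleton is sound and matches the paper.

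However, there is a genuine gap: the statement you label as ``the single hard statement'' --- that for $\HH$-primes $J_1\subsetneq J_2$ the ideal $J_2/J_1$ contains a nonzero normal $\HH$-eigenvector of $R/J_1$ --- is precisely the main technical content of the paper (its Sections 2 and 3), and your proposed routes to it do not work as stated. The deleting-derivations map $\theta$ sends a normal $\HH$-eigenvector $a$ of the coefficient ring $A$ into the localization $\Rhat = RS^{-1}$, not into $R$; one must multiply by the correct power $X^s$ (with $s$ maximal such that $\de^s(a)\ne 0$) to land in $R$, and then normality of $\theta(a)X^s$ \emph{in $R$} (rather than in $\Rhat$ or in the associated quantum affine space) requires the degree-by-degree argument of the paper's Lemma \ref{firstnormal}; normality does not simply ``transport'' along the embedding. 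Moreover, the induction step must produce normal elements in factor rings $R/P$ for $\HH$-primes $P$ with $P\cap A=0$ but $P\ne 0$; there the normal element is not of the form $\theta(a)X^s$ at all, but is the coset $a+P$ of a leading coefficient of a degree-one element of $P$, and proving its normality uses the explicit identification $P^*=(X-d)R^*$ and the relation $d=-a^{-1}c$ (the paper's Proposition \ref{R*nonHsimple} and Lemma \ref{lcoeff=a}). None of this is supplied or even correctly anticipated by your sketch. A secondary omission: the theorem asserts Tauvel's formula for all \emph{prime quotients} $R/Q$, not just for $R$; this needs the extra (easy) step of combining catenarity with the formula for $R$ to get $\GK((R/Q)/(P/Q)) + \height(P/Q) = \GK(R/Q)$.
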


The key requirement in proving this theorem is normal separation in $\Spec R$. Existence of suitable normal elements is established by induction on the number of indeterminates in $R$. The following two sections are devoted to the induction step, in which normal elements are constructed in certain skew polynomial algebras in one indeterminate and factor algebras thereof. Normal separation for quantum nilpotent algebras is achieved in Section \ref{proofmaintheorem} together with the desired homological properties, and Theorem \ref{catCGL} is proved there.

\subsection{Notation and conventions} Throughout, all algebras will be unital algebras over a fixed base field $\KK$. All the skew polynomial rings we consider will be of the form $A[X;\sig,\de]$ where the coefficient ring $A$ is a $\KK$-algebra, $\sig$ is a $\KK$-algebra automorphism of $A$, and $\de$ is a $\KK$-linear left $\sig$-derivation of $A$. The $\KK$-automorphism and $\KK$-linearity assumptions ensure that $A[X;\sig,\de]$ is a $\KK$-algebra, and that it is noetherian if $A$ is noetherian. The indeterminate $X$ in $A[X;\sig,\de]$ skew-commutes with elements $a\in A$ as follows: $Xa = \sig(a)X + \de(a)$.

\sectionnew{A first construction of normal elements}

\subsection{Basic assumptions}  \label{basicR}
Let $A$ be a noetherian $\KK$-algebra domain and $R = A[X;\sig,\de]$ a skew polynomial extension.

Assume throughout this section that 
\begin{itemize}
\item $\de$ is locally nilpotent.
\item There is an abelian group $\HH$ acting on $R$ by $\KK$-algebra automorphisms such that $X$ is an $\HH$-eigenvector and $A$ is $\HH$-stable.
\item There exists $h_\circ \in \HH$ such that $(h_\circ \cdot)|_A = \sig$ and the $h_\circ$-eigen\-value $\la_\circ$ of $X$ is not a root of unity.
\end{itemize}
As noted in \cite[Eq.~(3.1)]{GYncufd}, $\sig \de = \la_\circ \de \sig$. More generally \cite[Eq.~(3.2)]{GYncufd},
\begin{equation}  \label{hdelta}
(h\cdot)|_A \circ \de = \chi_X(h) \de \circ (h\cdot)|_A \qquad \forall\, h \in \HH,
\end{equation}
where $\chi_r : \HH \rightarrow \kx$ denotes the $\HH$-eigenvalue of an $\HH$-eigenvector $r\in R$.

\subsection{$\HH$-ideals} Recall that if $C$ is a ring equipped with an action of a group $\HH$ by automorphisms, then the \emph{$\HH$-ideals} of $C$ are the (two-sided) ideals of $C$ invariant under the $\HH$-action. An \emph{$\HH$-prime {\rm(}ideal\/{\rm)}} of $C$ is any proper $\HH$-ideal $P$ such that a product $I_1I_2$ of $\HH$-ideals of $C$ is contained in $P$ only if $I_1$ or $I_2$ is contained in $P$. The ring $C$ is said to be \emph{$\HH$-simple} provided $C\ne 0$ and the only $\HH$-ideals of $C$ are $0$ and $C$. The latter condition is equivalent to the condition that $0$ is the only $\HH$-prime of $C$.

\subsection{Cauchon extensions}  \label{cauchonext}
If in addition to \S\ref{basicR} we assume that
\begin{itemize}
\item Every $\HH$-prime of $A$ is completely prime,
\end{itemize}
then $R$ is a \emph{Cauchon extension} \cite[Definition 2.5]{LLR}.

\subsection{}
Since $\de$ is locally nilpotent, the set $S := \{ X^n \mid n \in \Znn\}$ is a denominator set in $R$ \cite[Lemme 2.1]{Ca1}. Set $ \Rhat := RS^{-1}$. Since the elements of $S$ are $\HH$-eigenvectors, the action of $\HH$ on $R$ extends uniquely to an action by $\KK$-algebra automorphisms on $\Rhat$.

Let $\theta : A \rightarrow \Rhat$ be the \emph{Cauchon map} defined by
\begin{equation}  \label{cauchonmap}
\theta(a) = \sum_{l=0}^\infty \dfrac{(1-\la_\circ)^{-l}}{(l)!_{\la_\circ}}\, \de^l \sig^{-l}(a) X^{-l}.
\end{equation}
(See \eqref{qnum} for the definition of $(l)!_{\la_\circ}$.)
Cauchon established in \cite[Propositions 2.1--2.4]{Ca1} that
\begin{itemize}
\item $\theta$ is an injective $\KK$-algebra homomorphism.
\item $\theta$ extends uniquely to an injective $\KK$-algebra homomorphism $A[Y;\sig] \rightarrow \Rhat$ with $\theta(Y) = X$.
\item Set $B := \theta(A)$ and $T := \theta(A[Y;\sig]) \subseteq \Rhat$. Then $T = B[X;\al]$ where $\al$ is the $\KK$-algebra automorphism of $B$ defined by $\al(\theta(a)) = \theta(\sig(a))$.
\item $S$ is also a denominator set in $T$, and $TS^{-1} = S^{-1}T = \Rhat$.
\end{itemize}
As is noted in \cite[p.327]{LLR}, $B\cap R \subseteq A$. 

By \cite[Lemma 2.6]{LLR} (whose proof only uses the assumptions of \S\ref{basicR}), $\theta$ is $\HH$-equivariant. Since the action of $\sig$ on $A$ is given by $h_\circ$, it follows that $\al = (h_\circ\cdot)|_B$.

\begin{lemma}  \label{theta(a)Xs}
Let $a\in A \setminus \{0\}$ and let $s\in \Znn$ be maximal such that $\de^s(a) \ne 0$. Then $s$ is minimal such that $\theta(a) X^s \in R$.
\end{lemma}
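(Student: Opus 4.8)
The plan is to expand $\theta(a)X^t$ explicitly from the defining formula of the Cauchon map and to read off membership in $R$ directly from the powers of $X$ that occur. Since $\de$ is locally nilpotent, the sum defining $\theta(a)$ is finite, so for every $t\in\Zset$
$$
\theta(a)X^t \;=\; \sum_{l=0}^{\infty} \frac{(1-\la_\circ)^{-l}}{(l)!_{\la_\circ}}\,\de^l\sig^{-l}(a)\,X^{t-l}
$$
is a finite $A$-linear combination of integer powers of $X$. The whole argument then reduces to identifying which powers of $X$ occur with nonzero coefficient and comparing them against the non-negative powers, which are exactly those lying in $R$.

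First I would record the left $A$-module decomposition $\Rhat = \bigoplus_{i\in\Zset} AX^i$, with $R = \bigoplus_{i\ge 0} AX^i$. Directness holds because any relation $\sum_i a_iX^i = 0$ in $\Rhat$ can be cleared to a relation in $R$ by right multiplication by a high power of $X\in S$, where uniqueness of coefficients in the skew polynomial ring $A[X;\sig,\de]$ forces each $a_i=0$. Consequently an element $\sum_i a_iX^i$ of $\Rhat$ with $a_i\in A$ lies in $R$ if and only if $a_i=0$ for all $i<0$. This is the criterion I will apply to $\theta(a)X^t$.

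Next I would pin down the top index in the expansion. The scalars $\frac{(1-\la_\circ)^{-l}}{(l)!_{\la_\circ}}$ are all nonzero, because $\la_\circ$ is not a root of unity (so $\la_\circ\ne 1$ and every $\la_\circ$-integer is nonzero, making the $\la_\circ$-factorial invertible). For the derivation factors, the relation $\sig\de = \la_\circ\de\sig$ yields $\de\sig^{-1}=\la_\circ\sig^{-1}\de$ and hence, upon iteration, $\de^l\sig^{-l}=\la_\circ^{l^2}\sig^{-l}\de^l$; since $\la_\circ\ne 0$ and $\sig^{-l}$ is injective, $\de^l\sig^{-l}(a)\ne 0$ precisely when $\de^l(a)\ne 0$. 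Therefore the largest $l$ contributing a nonzero coefficient is exactly $s$, so the powers of $X$ appearing in $\theta(a)X^t$ are $X^t, X^{t-1},\dots,X^{t-s}$, all with nonzero coefficients.

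Combining these, $\theta(a)X^t$ lies in $R$ if and only if its lowest power $X^{t-s}$ has non-negative exponent, i.e. $t\ge s$; in particular $\theta(a)X^s\in R$ while $\theta(a)X^t\notin R$ for every $t<s$, which is the asserted minimality of $s$. The point demanding the most care — and the main obstacle — is making the ``membership in $R$'' criterion rigorous, namely establishing the direct-sum decomposition of $\Rhat$ over $A$ and confirming that none of the coefficients $\de^l\sig^{-l}(a)$ for $0\le l\le s$ secretly vanish. Once that bookkeeping is in place, the conclusion is immediate.
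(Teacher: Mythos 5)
Your proof is correct and follows essentially the same route as the paper's: both arguments come down to observing that the coefficient of the lowest power $X^{t-s}$ in $\theta(a)X^t$ is a nonzero scalar multiple of $\de^s\sig^{-s}(a)=\la_\circ^{s^2}\sig^{-s}\de^s(a)\ne0$, and that an element of $\Rhat$ with a nonzero coefficient on a negative power of $X$ cannot lie in $R$. The paper packages the second point as the identity $A\cap RX=0$ inside a contradiction argument, while you make it explicit via the decomposition $\Rhat=\bigoplus_{i\in\Zset}AX^i$; this is a presentational difference only, and your verification of the nonvanishing of the $\la_\circ$-factorials and of the intermediate coefficients is sound.
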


\begin{proof} Since $\de^l(a) = 0$ for $l>s$, we have $\theta(a) = \sum_{l=0}^s c_l \de^l \sig^{-l}(a) X^{-l}$ for some $c_l \in \kx$. Obviously $\theta(a) X^s \in R$.

Suppose that $s>0$ and $\theta(a) X^t \in R$ for some $t<s$. Then $\theta(a) X^{s-1} \in R$, from which it follows that $\de^s \sig^{-s}(a) X^{-1} \in R$. Now $\de^s \sig^{-s}(a) \in A \cap RX$, whence $\de^s \sig^{-s}(a) = 0$. But $\de^s \sig^{-s} = \la_\circ^{s^2} \sig^{-s} \de^s$, so we obtain $\de^s(a) =0$, contradicting our hypotheses. Therefore $s$ is minimal such that $\theta(a) X^s \in R$.
\end{proof}

The following lemma is excerpted from the proof of \cite[Proposition 2.9]{LLR}.

\begin{lemma}  \label{firstnormal}
Let $a\in A$ be a normal $\HH$-eigenvector, and let $s\in \Znn$ be maximal such that $\de^s(a) \ne 0$. Then the element $x := \theta(a) X^s$ is a normal $\HH$-eigenvector in $R$. In particular, $xX = \eta^{-1} Xx$,
where $\eta$ is the $\sig$-eigenvalue of $a$.
\end{lemma}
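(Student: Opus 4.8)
The plan is to reduce the assertion to the statement that conjugation by $x$ stabilizes $R$, dispatch the eigenvector and commutation claims, establish normality in the localization $\Rhat$, and then confront the descent to $R$ as the main difficulty. By Lemma~\ref{theta(a)Xs} we already know $x=\theta(a)X^s\in R$. Since $R$ is generated as a $\KK$-algebra by $A$ and $X$, once $x$ is shown to be normal in $\Rhat$ the conjugation $\phi:=x(\cdot)x^{-1}$ is an automorphism of $\Rhat$, and $xR=Rx$ will follow as soon as $\phi$ and $\phi^{-1}$ carry both generators back into $R$; that is, it suffices to check that $x$ is an $\HH$-eigenvector, to compute $\phi(X)$, and to show $\phi^{\pm1}(A)\subseteq R$.

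First the easy parts. The eigenvector claim is immediate: $\theta$ is $\HH$-equivariant, so $\theta(a)$ is an $\HH$-eigenvector with the same eigenvalue as $a$, and $X$ is an $\HH$-eigenvector by hypothesis, so the product $x$ is one as well. For the commutation relation I work in $T=B[X;\al]$. Because $\al\ci\theta=\theta\ci\sig$ and $a$ is a $\sig$-eigenvector with $\sig(a)=\eta a$ (as $\sig=(h_\circ\cdot)|_A$ and $a$ is an $\HH$-eigenvector), I compute $Xx=X\theta(a)X^s=\al(\theta(a))X^{s+1}=\theta(\sig(a))X^{s+1}=\eta\,\theta(a)X^{s+1}=\eta xX$, which gives $xX=\eta^{-1}Xx$. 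This is the final ``in particular'' statement, and it also yields $\phi(X)=\eta^{-1}X\in R$ and $\phi^{-1}(X)=\eta X\in R$.

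Next, normality in the localization. Passing to $\Rhat=RS^{-1}=B[X^{\pm1};\al]$, I observe that $\theta\colon A\to B$ is a $\KK$-algebra isomorphism carrying the normal element $a$ to $\theta(a)$, so $\theta(a)$ is normal in $B$; together with $\theta(a)X=\eta^{-1}X\theta(a)$ this makes $\theta(a)$ normal in $\Rhat$, whence $x=\theta(a)X^s$ is normal in $\Rhat$ as a product of normal elements. In particular, for each $b\in A$ there is a unique $\xi\in\Rhat$ with $xb=\xi x$, namely $\xi=\phi(b)$.

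It remains to prove $\xi\in R$ (and symmetrically for $\phi^{-1}$), and this is where I expect the real work to lie. Grading $\Rhat$ by $X$-degree, $x$ is homogeneous of degree $s$, while $b\in A$ has $X$-degrees confined to $[-t_b,0]$, where $t_b$ is the largest integer with $\de^{t_b}(b)\ne0$; hence $\xi=\phi(b)$ again has $X$-degrees in $[-t_b,0]$. This degree bound alone does \emph{not} force $\xi\in R$, since $R$ contains elements of negative $X$-degree in $B$-coordinates. To pin $\xi$ down I would compute $\phi(b)$ through the normalizing automorphism $\rho=\theta\tau\theta^{-1}$ of $\theta(a)$, where $\tau\in\Aut A$ is defined by $ac=\tau(c)a$, with the expected outcome being the clean identity $\xi=\tau(\sig^s(b))\in A\subseteq R$ (one checks its leading term against that of $xb$ using normality of $a$). \textbf{The main obstacle} is exactly this identification: the conjugation is naturally carried out in the $B$-coordinates of $T=B[X;\al]$, where it \emph{a priori} produces an element of $B$ rather than of $A$, and one must show it returns to $A$. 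This reduces to a compatibility between $\tau$ and $\de$ --- concretely, that $\tau$ stabilizes $\ker\de$ --- which I expect to follow from $a$ being an $\HH$-eigenvector: its normalizing automorphism respects the $\HH$-eigenspace decomposition of $A$, and $\ker\de$ is $\HH$-stable by \eqref{hdelta}. Once $\phi(A)\subseteq R$ and $\phi^{-1}(A)\subseteq R$ are secured, we conclude $\phi(R)=R$, i.e. $xR=Rx$, so that $x$ is a normal $\HH$-eigenvector in $R$.
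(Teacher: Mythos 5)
Your treatment of the peripheral claims is fine and matches the paper: $x$ is an $\HH$-eigenvector because $\theta$ is $\HH$-equivariant, the relation $Xx=\eta xX$ follows from $X\theta(a)=\al(\theta(a))X=\eta\theta(a)X$, and $x=\theta(a)X^s$ is normal in $\Rhat=B[X^{\pm1};\al]$ because $\theta(a)$ is normal in $B$ and skew-commutes with $X$. (One quibble: $x$ is not a unit of $\Rhat$, so $\phi=x(\cdot)x^{-1}$ should be defined as the automorphism with $xr=\phi(r)x$, which exists precisely because $x\Rhat=\Rhat x$ and $\Rhat$ is a domain.) But the lemma's actual content is the descent from $\Rhat$ to $R$, and there your argument stops at a conjecture. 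You assert that the "expected outcome" is $\phi(b)=\tau(\sig^s(b))$ for $b\in A$, and you justify it only by a leading-term comparison --- which shows that \emph{if} $\phi(b)$ lies in $A$ then it must equal $\tau(\sig^s(b))$, but says nothing about the lower-order terms of $xb-\tau(\sig^s(b))x$ in $\Rhat$, which is exactly what has to vanish. The proposed reduction to "$\tau$ stabilizes $\ker\de$" is not established either: $\tau$ does preserve $\HH$-eigenspaces and $\ker\de$ is $\HH$-stable by \eqref{hdelta}, but an automorphism preserving each eigenspace need not preserve an $\HH$-stable subspace such as $\ker\de\,$; and even granting that claim, you have not shown how it yields $\phi(A)\subseteq R$. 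So the core of the lemma is unproved.

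It is worth noting that the paper's proof avoids computing $\phi$ altogether. It observes that $I:=x\Rhat\cap R=\Rhat x\cap R$ is an ideal of $R$ containing $Rx$ and $xR$, and proves $I=Rx$ by a degree-reduction argument: given $y\in I$, some $yX^t$ lies in $Rx$; taking $t$ minimal and writing $yX^t=rx$ with $r=\sum r_iX^i$, $x=\sum x_jX^j$ ($r_i,x_j\in A$), the constant coefficient gives $r_0x_0=0$ if $t>0$; since $x_0\ne 0$ and $A$ is a domain, $r_0=0$, so one can divide by $X$ and contradict minimality of $t$. The crucial input here is $x_0\ne0$, which comes from the \emph{minimality} of $s$ in Lemma \ref{theta(a)Xs} --- a fact your argument never invokes, and which is precisely what controls the negative-degree tail that your degree bound on $\phi(b)$ cannot. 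If you want to salvage your route, you would need to prove $xA\subseteq Rx$ and $Ax\subseteq xR$ directly, and some form of the paper's leading/constant-coefficient induction seems unavoidable.
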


\begin{proof} Since $\theta$ is $\HH$-equivariant, the element $b := \theta(a)$ is a normal $\HH$-eigenvector in $B$, and the $h_\circ$-eigenvalue of $b$ equals that of $a$, namely $\eta$. By Lemma \ref{theta(a)Xs}, $s$ is minimal such that $b X^s \in R$. This places $x$ in $R$, and clearly $x$ is  an $\HH$-eigenvector. 

Since
\begin{equation}  \label{Xb}
Xb = \al(b)X = h_\circ(b)X = \eta bX,
\end{equation}
we see that $xX = \eta^{-1} Xx$. Moreover, we see that $b$ is also normal in $T$ and in $\Rhat$. In particular, $b\Rhat = \Rhat b$ is an ideal of $\Rhat$. But $b\Rhat = x\Rhat$, and $\Rhat b = \Rhat x$ because $x = \eta^{-s} X^s b$. Thus,
$$
I := x\Rhat \cap R = \Rhat x \cap R
$$
is an ideal of $R$.
We show that $I = Rx = xR$, which will prove that $x$ is normal in $R$. Obviously $I$ contains $Rx$ and $xR$.

Let $y \in I$. Then $y \in b\Rhat$ implies $yX^u \in bT = Tb$ for some $u\ge0$. Now $yX^u = cb$ for some $c\in T$, and $cX^v \in R$ for some $v\ge0$. From \eqref{Xb}, we obtain
$$
y X^{u+v+s} = cb X^{v+s} = \eta^{-v} c X^v b X^s = \eta^{-v} c X^v x \in Rx.
$$
Let $t\in \Znn$ be minimal such that $yX^t \in Rx$, and write $yX^t = rx$ for some $r\in R$.

We wish to show that $t=0$. Write
$$
r = \sum_{i\ge0} r_iX^i \,, \qquad y = \sum_{i\ge0} y_iX^i \,, \qquad x = \sum_{i\ge0} x_iX^i
$$
for some $r_i, y_i, x_i \in A$. In case $s=0$, we would have
$x = b = a \in A$ and so $x_0= a \ne 0$. In case $s>0$, we would have
$$
x_0 X^{-1} + \sum_{i\ge1} x_i X^{i-1} = xX^{-1} = bX^{s-1} \notin R
$$
by the minimality of $s$, so again $x_0 \ne 0$. Thus, $x_0 \ne 0$ in all cases.

Observe that
\begin{align*}
\sum_{i\ge0} y_i X^{i+t} &= yX^t = rx = \sum_{i\ge0} r_i X^i bX^s = \sum_{i\ge0} \eta^i r_ib X^{i+s} = \sum_{i\ge0} \eta^i r_i x X^i  \\
&= \sum_{i,j\ge0} \eta^i r_i x_j X^{i+j} \,.
\end{align*}
If $t>0$, it would follow that $\eta^0 r_0 x_0 = 0$, whence $r_0=0$. Then $r = r'X$ for some $r' \in R$, and so
$$
yX^t = r' Xx = \eta r' xX.
$$
But then $yX^{t-1} = \eta r' x \in Rx$, contradicting the minimality of $t$. Therefore $t=0$.

Consequently, $y = rx$, proving that $I = Rx$.

The proof that $I = xR$ is very similar, and is left to the reader.
\end{proof}

\subsection{$q$-skew calculations}
Since $\de \sig = \la_\circ^{-1} \sig \de$, the pair $(\sig,\de)$ is a \emph{$\la_\circ^{-1}$-skew derivation} in the terminology of \cite{qskew}. We shall need the following calculations.

The \emph{$q$-Leibniz Rules} for the $\la_\circ^{-1}$-skew situation \cite[Lemma 6.2]{qskew} say that
\begin{equation}  \label{qLeibniz}
\begin{aligned}
\delta^n(ef) &= \sum_{i=0}^n \binom{n}{i}_{\la_\circ^{-1}} \sig^{n-i} \de^i(e) \de^{n-i}(f)  \\
X^n e &= \sum_{i=0}^n \binom{n}{i}_{\la_\circ^{-1}} \sig^{n-i} \de^i(e) X^{n-i}
\end{aligned}
 \quad \forall\, n \in \Znn\,,\ e,f\in A,
\end{equation}
where the $q$-binomial coefficients, for $q= \la_\circ^{-1}$, are given by
\begin{equation}  \label{qnum}
\binom{n}{i}_q = \frac{(n)!_q}{(i)!_q (n-i)!_q} \,, \qquad (m)!_q = (m)_q (m-1)_q \cdots (1)_q \,, \qquad (m)_q = \frac{q^m-1}{q-1} \,.
\end{equation}

The argument of \cite[Lemme 7.2.3.2]{Ric} yields

\begin{lemma}  \label{Ric.arg}
Let $C$ be a $\KK$-algebra domain and $(\sig,\de)$ a $q$-skew derivation on $C$, where $q \in \kx$ is not a root of unity. Suppose $c,e \in C$ with $\de(c) = ce$ or $\de(c) = ec$. If there is some $m \in \Znn$ such that $\de^m(c) = \de^m(e) = 0$, then $\de(c) = 0$.
\end{lemma}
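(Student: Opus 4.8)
The plan is to argue by contradiction in a single clean step: assuming $\de(c)\ne 0$, I will exhibit a power $\de^n(c)\ne 0$ with $n$ strictly larger than the top nonvanishing index of $c$, which is absurd. Throughout I would use that a $q$-skew derivation satisfies the $q$-Leibniz rule \eqref{qLeibniz} (with parameter $q$), together with the fact that when $q\in\kx$ is not a root of unity every $q$-integer $(k)_q$ with $k\ge 1$ is nonzero, so that each $q$-binomial coefficient $\binom{n}{i}_q$ is a nonzero scalar in $\KK$.

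First I would dispose of the degenerate cases: if $c=0$ or $e=0$ then $\de(c)=ce$ (resp. $ec$) is already $0$, so I may assume $c\ne 0$ and $e\ne 0$. The hypothesis $\de^m(c)=\de^m(e)=0$ then lets me set $p:=\max\{i\in\Znn : \de^i(c)\ne 0\}$ and $r:=\max\{j\in\Znn : \de^j(e)\ne 0\}$, both finite and less than $m$ (note that vanishing of a $\de$-power propagates upward, so $\de^i(c)\ne 0$ is equivalent to $i\le p$, and similarly for $e$). This finiteness is precisely where the nilpotency hypothesis is spent; without it the two maxima need not exist and the argument collapses.

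Treating the case $\de(c)=ce$, I would expand $\de^n(c)=\de^{n-1}(ce)$ via the $q$-Leibniz rule to obtain the recursion $\de^n(c)=\sum_{i=0}^{n-1}\binom{n-1}{i}_q \sig^{n-1-i}(\de^i(c))\,\de^{n-1-i}(e)$. The crux is to evaluate this at the single well-chosen exponent $n=p+r+1$: a summand can be nonzero only if $\de^i(c)\ne 0$, forcing $i\le p$, and $\de^{p+r-i}(e)\ne 0$, forcing $i\ge p$, so exactly the term $i=p$ survives and $\de^{p+r+1}(c)=\binom{p+r}{p}_q \sig^{r}(\de^p(c))\,\de^{r}(e)$. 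Here $\binom{p+r}{p}_q\ne 0$ because $q$ is not a root of unity, while $\de^p(c)\ne 0$ and $\de^r(e)\ne 0$ by maximality and $\sig$ is an automorphism of the domain $C$, so the right-hand side is a nonzero product. Thus $\de^{p+r+1}(c)\ne 0$ with $p+r+1>p$, contradicting the maximality of $p$; hence $c=0$ or $e=0$ and $\de(c)=0$.

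Finally, the case $\de(c)=ec$ is entirely symmetric: the analogous expansion $\de^n(c)=\sum_{i=0}^{n-1}\binom{n-1}{i}_q \sig^{n-1-i}(\de^i(e))\,\de^{n-1-i}(c)$ at $n=p+r+1$ leaves only the term $i=r$, giving $\de^{p+r+1}(c)=\binom{p+r}{r}_q \sig^{p}(\de^r(e))\,\de^{p}(c)\ne 0$ and the same contradiction. I expect the only real obstacle to be the bookkeeping behind selecting the exponent $n=p+r+1$ that isolates a unique nonvanishing term; once that choice is made, the non-root-of-unity condition is exactly what guarantees the surviving $q$-binomial coefficient does not accidentally annihilate it.
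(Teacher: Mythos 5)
Your proof is correct and follows essentially the same route as the paper: both arguments take $c,e\neq 0$, let $p$ and $r$ be the top nonvanishing $\de$-exponents of $c$ and $e$, apply the $q$-Leibniz rule to $\de^{p+r}(ce)$ so that only the $i=p$ term survives, and use that $\binom{p+r}{p}_q\neq 0$ (as $q$ is not a root of unity) to conclude $\de^{p+r+1}(c)\neq 0$, contradicting the maximality of $p$. The only difference is cosmetic: you write out the symmetric case $\de(c)=ec$ explicitly, where the paper leaves it as ``a similar contradiction.''
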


\begin{proof} We must show that one of $c$ or $e$ is zero. Suppose that $c,e \ne 0$, and let $s,t\in \Znn$ be maximal such that $\de^s(c), \de^t(e) \ne 0$. Assume first that $\de(c) = ce$. By the $q$-Leibniz Rule,
$$
\delta^{s+t}(ce) = \sum_{i=0}^{s+t} \binom{s+t}{i}_q \sig^{s+t-i} \de^i(c) \de^{s+t-i}(e) = \binom{s+t}{s}_q \sig^t\de^s(c) \de^t(e) \ne 0,
$$
since $\binom{s+t}{s}_q \ne 0$ because $q$ is not a root of unity. But then $\de^{s+t+1}(c) \ne 0$, due to the assumption $\de(c) = ce$. This is impossible, since $s+t+1 > s$. The assumption $\de(c) = ec$ leads to a similar contradiction.
\end{proof}

%%%%%%%%%%%%%%%

\sectionnew{Normal elements in Cauchon extensions}

Throughout this section, keep the assumptions of \S\S\ref{basicR}, \ref{cauchonext}, so that $R = A[X;\sig,\de]$ is a Cauchon extension.

\subsection{$\HH$-primes in Cauchon extensions}  \label{specstrat.bits}

By \cite[Lemmas 3.2, 3.3, Proposition 3.4 and their proofs]{GL},
\begin{enumerate}[(i)]
\item Every $\HH$-prime of $R$ is completely prime.
\item Every $\HH$-prime of $R$ contracts to a $\de$-stable $\HH$-prime of $A$.
\item For any $\de$-stable $\HH$-prime $P_0$ of $A$, there are at most two $\HH$-primes of $R$ that contract to $P_0$ in $A$. There is always at least one, namely $P_0R$.
\end{enumerate}
We shall also need the observation
\begin{enumerate}[(i)]
\item[(iv)] If $P$ is a prime (ideal) of $A$ (or $R$), then $(P:\HH) := \bigcap_{h\in \HH} (h\cdot P)$ is an $\HH$-prime of $A$ (or $R$).
\end{enumerate}
It follows that
\begin{enumerate}[(i)]
\item[(v)] If $I$ is an $\HH$-ideal of $A$ (or $R$), then all primes minimal over $I$ are $\HH$-primes.
\end{enumerate}

By the usual localization procedures for skew polynomial rings, $\sig$ and $\de$ extend uniquely to an automorphism and a $\sig$-derivation on $A^* := \Fract A$, and the skew polynomial algebra $R^* := A^*[X;\sig,\de]$ equals the localization of $R$ with respect to $A\setminus\{0\}$. The $\HH$-actions on $A$ and $R$ extend uniquely to actions on $A^*$ and $R^*$, and $(h_\circ\cdot) = \sig$ on $A^*$. Hence, except for local nilpotence of  $\de$, the assumptions of \S\S\ref{basicR}, \ref{cauchonext} also hold for $A^*$ and $R^*$.

Recall that an \emph{inner $\sig$-derivation} of $A^*$ (or $A$) is a map of the form $a \mapsto da - \sig(a)d$, for some fixed $d\in A^*$ (or $d\in A$). Such a $\sig$-derivation is denoted $\de_d$.

\begin{proposition}  \label{R*nonHsimple}
Assume that $R^*$ is not $\HH$-simple.

{\rm(a)} There is a unique element $d\in A^*$ such that $\de = \de_d$ on $A^*$ and $h\cdot d = \chi_X(h)d$ for all $h \in \HH$. In particular, $X-d$ is an $\HH$-eigenvector with $\chi_{X-d} = \chi_X$.

{\rm(b)} There is a unique nonzero $\HH$-prime in $R^*$, namely $(X-d)R^* = R^*(X-d)$.

{\rm(c)} Let $I^*$ be a proper nonzero $\HH$-ideal of $R^*$, let $n$ be the minimum degree for nonzero elements of $I^*$, and let $f = X^n + c X^{n-1} + [\text{\rm lower terms}]$, with $c\in A^*$, be a monic element of $I^*$ with degree $n$. Then $n>0$ and $d = (\la_\circ-1)(1-\la_\circ^n)^{-1} c$.
\end{proposition}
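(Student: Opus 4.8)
The plan is to localize at $A\setminus\{0\}$ and then change variables so as to trivialize the derivation. First I would record that $A^* = \Fract A$ is a division ring (since $A$ is a noetherian domain) and that $R^* = A^*[X;\sig,\de]$ is a skew polynomial ring over a division ring, hence admits a right/left division algorithm against any monic polynomial. Since $R^*$ is not $\HH$-simple, I fix a nonzero proper $\HH$-ideal $I^*$, let $n$ be its minimal degree, and take a monic $f = X^n + cX^{n-1} + \cdots$ of that degree. The difference of two monic degree-$n$ elements of $I^*$ lies in $I^*$ and has degree $<n$, hence vanishes, so $f$ is the unique such element. Applying this uniqueness to $\chi_X(h)^{-n}(h\cdot f)$ shows $f$ is an $\HH$-eigenvector with $\chi_f = \chi_X^n$; reading off the $X^{n-1}$-coefficient then gives $h\cdot c = \chi_X(h)c$. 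Moreover both $fa$ and $\sig^n(a)f$ lie in $I^*$ with the same degree-$n$ coefficient $\sig^n(a)$, so their difference vanishes and $fa = \sig^n(a)f$ for all $a\in A^*$. Finally $n\ge 1$, since $n=0$ would put the monic $f=1$ in $I^*$, forcing $I^* = R^*$.

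Next I would extract $d$ by comparing the coefficients of $X^{n-1}$ on the two sides of $fa = \sig^n(a)f$ via the $q$-Leibniz rule \eqref{qLeibniz}. This yields $(n)_{\la_\circ^{-1}}\sig^{n-1}\de(a) + c\sig^{n-1}(a) = \sig^n(a)c$; substituting $a\mapsto \sig^{1-n}(a)$ and using $\sig^{n-1}\de\sig^{1-n} = \la_\circ^{n-1}\de$ (a consequence of $\de\sig = \la_\circ^{-1}\sig\de$) gives $\de = \de_d$ on $A^*$ with $d = (\la_\circ-1)(1-\la_\circ^n)^{-1}c$, where $1-\la_\circ^n \ne 0$ because $\la_\circ$ is not a root of unity. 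As $d$ is a scalar multiple of $c$, it is an $\HH$-eigenvector with $\chi_d = \chi_X$, and $X-d$ is an eigenvector with $\chi_{X-d}=\chi_X$. This already gives the existence half of (a), the bound $n\ge 1$, and the formula of (c), provided $d$ turns out to be independent of the chosen ideal — which is precisely the uniqueness asserted in (a).

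The decisive simplification is to set $Y := X-d$. Because $\de = \de_d$, one checks $Ya = \sig(a)Y$ for all $a\in A^*$, so $R^* = A^*[Y;\sig]$ is a derivation-free skew polynomial ring with $h\cdot Y = \chi_X(h)Y$. Redoing the minimal-generator analysis in this presentation, every nonzero $\HH$-ideal has a unique monic minimal generator $g = Y^m + \sum_{i<m} c'_i Y^i$ which is an $\HH$-eigenvector and satisfies $ga = \sig^m(a)g$. Comparing coefficients of $Y^i$ gives, for each $i<m$ with $c'_i\ne 0$, both that $\sig^{m-i}$ is conjugation by $c'_i$ and (evaluating the eigenvector relation at $h_\circ$) that $\sig(c'_i) = \la_\circ^{m-i}c'_i$. \emph{The main obstacle is this rigidity step}: the two facts force $\sig^{m-i}(c'_i) = c'_i$ on one hand and $\sig^{m-i}(c'_i) = \la_\circ^{(m-i)^2}c'_i$ on the other, so $\la_\circ^{(m-i)^2}=1$, contradicting that $\la_\circ$ is not a root of unity. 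Hence all lower coefficients vanish, $g = Y^m$, and by division every nonzero $\HH$-ideal equals $Y^mR^* = R^*Y^m$.

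Finally I would read off (b) and the residual uniqueness. Since the assignment $X\mapsto d$ is a homomorphism (exactly because $\de = \de_d$) with kernel $(X-d)R^*$, we get $R^*/(X-d)R^* \cong A^*$, a domain; thus $YR^* = (X-d)R^* = R^*(X-d)$ is a completely prime $\HH$-ideal. For $m\ge 2$ the factorization $(YR^*)(Y^{m-1}R^*) = Y^mR^*$ shows $Y^mR^*$ is not $\HH$-prime, so $(X-d)R^*$ is the unique nonzero $\HH$-prime, giving (b). Uniqueness of $d$ in (a) follows: any $d'$ with $\de = \de_{d'}$ and $h\cdot d' = \chi_X(h)d'$ makes $X-d'$ an $\HH$-eigenvector with $R^*/(X-d')R^* \cong A^*$, so $(X-d')R^*$ is a nonzero $\HH$-prime and hence equals $(X-d)R^*$; comparing their unique degree-one monic generators forces $d'=d$. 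For a general ideal in (c), the same coefficient computation gives $\de = \de_{d''}$ with $d'' = (\la_\circ-1)(1-\la_\circ^n)^{-1}c$, and uniqueness yields $d'' = d$.
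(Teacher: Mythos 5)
Your proof is correct and complete. The paper itself gives no argument at this point---it simply defers to \cite[Lemma 3.3]{GL} and its proof, using that $A^*$ is $\HH$-simple because it is a division ring---and your write-up is essentially a self-contained reconstruction of that standard argument: the unique monic element of minimal degree in an $\HH$-ideal, the relations $h\cdot f = \chi_X(h)^n f$ and $fa=\sig^n(a)f$, extraction of $d$ from the $X^{n-1}$-coefficient via the $q$-Leibniz rule, the change of variable $Y=X-d$ turning $R^*$ into $A^*[Y;\sig]$, and the root-of-unity rigidity forcing the minimal generator to be $Y^m$, whence $(X-d)R^*$ is the unique nonzero $\HH$-prime.
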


\begin{proof} These follow from \cite[Lemma 3.3]{GL} and its proof, since $A^*$ is $\HH$-simple. 
\end{proof} 

Whenever $R^*$ is not $\HH$-simple, we keep the notation $d$ for the element of $A^*$ described in Proposition \ref{R*nonHsimple}(a). Note that $R^* = A^*[X-d;\sig]$ in this case, and that items (i)--(v) above hold for $R^*$ and $A^*$.

\begin{corollary}  \label{uniquePcontract0}
If $R^*$ is not $\HH$-simple, then $(X-d) R^* \cap R$ is the unique nonzero $\HH$-prime of $R$ that contracts to $0$ in $A$. Moreover, any $\HH$-ideal of $R$ that contracts to $0$ in $A$ is contained in $(X-d) R^* \cap R$.
\end{corollary}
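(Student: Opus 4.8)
The plan is to transfer the whole question to the localization $R^* = A^*[X;\sig,\de]$, whose $\HH$-prime structure is completely described by Proposition~\ref{R*nonHsimple}, and then pull everything back along the contraction map. Write $Q := (X-d)R^*$, which by Proposition~\ref{R*nonHsimple}(b) is the unique nonzero $\HH$-prime of $R^*$. Since $R^* = A^*[X-d;\sig]$, quotienting by $Q$ collapses the variable and yields $R^*/Q \cong A^*$, a domain; hence $Q$ is completely prime and $Q \cap A^* = 0$. The claim is that $Q \cap R$ is the asserted $\HH$-prime of $R$, and the two assertions of the corollary will be handled separately.

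First I would verify that $Q \cap R$ is a nonzero $\HH$-prime of $R$ contracting to $0$ in $A$. It is $\HH$-stable as the contraction of the $\HH$-ideal $Q$, and completely prime as the contraction of a completely prime ideal (since $R/(Q\cap R)$ embeds in the domain $R^*/Q$); being an $\HH$-stable prime ideal, it is in particular an $\HH$-prime. It contracts to $0$ because $(Q\cap R)\cap A = Q\cap A \subseteq Q\cap A^* = 0$. For nonvanishing, since $d \in A^* = \Fract A$ there is some $c \in A\setminus\{0\}$ with $cd \in A$; then $c(X-d) = cX - cd$ has both coefficients in $A$, so it lies in $R$, and it is nonzero as it has degree $1$ with leading coefficient $c \neq 0$. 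As $Q = R^*(X-d)$ is two-sided, $c(X-d) \in Q$, so $0 \neq c(X-d) \in Q\cap R$.

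Uniqueness is then immediate from the counting result recalled in \S\ref{specstrat.bits}. Since $A$ is a domain, $0$ is a $\de$-stable $\HH$-prime of $A$, so item (iii) there shows that at most two $\HH$-primes of $R$ contract to $0$; one of these is the ideal $0\cdot R = 0$, leaving at most one nonzero $\HH$-prime contracting to $0$. As $Q\cap R$ is exactly such a prime, it is the unique one.

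For the final assertion, let $I$ be an $\HH$-ideal of $R$ with $I\cap A = 0$; I may assume $I \neq 0$. Then $I$ is disjoint from the denominator set $S := A\setminus\{0\}$, so by the standard Ore criterion the extension $IR^*$ is a proper $\HH$-ideal of $R^*$, and it is nonzero since $I \subseteq IR^*$. Choosing a prime of $R^*$ minimal over $IR^*$ and invoking item (v) of \S\ref{specstrat.bits} (valid for $R^*$), this prime is a nonzero $\HH$-prime of $R^*$, hence equals $Q$; therefore $IR^* \subseteq Q$, and contracting gives $I \subseteq IR^*\cap R \subseteq Q\cap R$. The only genuinely computational point is the nonvanishing of $Q\cap R$ obtained by clearing denominators; the remaining effort is the careful bookkeeping of the extension/contraction correspondence for $\HH$-ideals between $R$ and $R^*$, together with the routine fact that an $\HH$-stable prime ideal is automatically an $\HH$-prime.
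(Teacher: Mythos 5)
Your proof is correct and follows essentially the same route as the paper: identify $(X-d)R^*$ as the unique nonzero $\HH$-prime of $R^*$ via Proposition~\ref{R*nonHsimple}(b), contract it to $R$, and, for the final assertion, localize the $\HH$-ideal $I$, take a prime minimal over the proper nonzero ideal $IR^*$, and apply \S\ref{specstrat.bits}(v). The only (harmless) divergence is in the uniqueness step, where you invoke the counting bound of \S\ref{specstrat.bits}(iii) (at most two $\HH$-primes of $R$ over $0$, one of which is $0$ itself) rather than the paper's tactic of extending an arbitrary nonzero $\HH$-prime $Q$ with $Q\cap A=0$ to the nonzero $\HH$-prime $QR^*=(X-d)R^*$ and contracting back; both are sound, and your explicit denominator-clearing to see that $(X-d)R^*\cap R\ne 0$ makes precise a point the paper leaves implicit.
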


\begin{proof} On one hand, $P^* := (X-d)R^*$ is a nonzero $\HH$-prime of $R^*$ that contracts to $0$ in $A^*$, whence $P^* \cap R$ is a nonzero $\HH$-prime of $R$ that contracts to $0$ in $A$. On the other hand, any nonzero $\HH$-prime $Q$ of $R$ with $Q\cap A = 0$ localizes to a nonzero $\HH$-prime $Q R^*$ of $R^*$, whence $Q R^* = P^*$ and thus $Q = Q R^*\cap R = P^* \cap R$.

Similarly, any $\HH$-ideal $I$ of $R$ with $I\cap A = 0$ localizes to an $\HH$-ideal $I R^*$ of $R^*$. Since $I$ is disjoint from $A \setminus \{0\}$, we must have $IR^* \ne R^*$, whence there is at least one prime $Q^*$ of $R^*$ minimal over $IR^*$. Then $Q^*$ is an $\HH$-prime (\S\ref{specstrat.bits}(v)), whence $Q^* = P^*$. Therefore $I \subseteq IR^*\cap R \subseteq P^* \cap R$.
\end{proof}

\subsection{Some normal $\HH$-eigenvectors}

\begin{lemma}  \label{lcoeff=a}
Assume there is a nonzero $\HH$-prime $P$ in $R$ with $P\cap A = 0$. Let $a\in A$ be a normal $\HH$-eigenvector, and $s\in \Znn$ maximal such that $\de^s(a) \ne 0$.

{\rm(a)} If $s>0$, then $x := \theta(a) X^s$ is a normal $\HH$-eigenvector in $R$ and $x\in P$. Moreover, $d = \eta^{-1} (\la_\circ^s-1)^{-1} a^{-1} \de(a)$ and $\de(a) a = \eta \la_\circ^s a \de(a)$, where $\eta := \chi_a(h_\circ)$.

{\rm(b)} Now assume that $a$ is the leading coefficient of some element of $P$ with degree $1$. Then $a+P$ is normal in $R/P$. Moreover, if also $s=0$, then $\de \equiv 0$ and $P = XR$.
\end{lemma}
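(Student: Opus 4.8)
The hypothesis that there is a nonzero $\HH$-prime $P$ with $P\cap A=0$ means that localizing $P$ at $A\setminus\{0\}$ gives a proper nonzero $\HH$-ideal of $R^*$, so $R^*$ is not $\HH$-simple and Proposition \ref{R*nonHsimple} supplies the element $d\in A^*$ with $\de=\de_d$ on $A^*$; by Corollary \ref{uniquePcontract0} the given $P$ must equal $(X-d)R^*\cap R$. Since $a$ is an $\HH$-eigenvector, $\sig(a)=(h_\circ\cdot a)=\eta a$, and $\sig(d)=\la_\circ d$ (Proposition \ref{R*nonHsimple}(a)), so throughout I may use $\de(a)=da-\sig(a)d=da-\eta ad$. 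For part (a), the normality and eigenvector properties of $x=\theta(a)X^s$ are exactly Lemma \ref{firstnormal}. To place $x$ in $P$ I would argue by degree: $x=aX^s+(\text{lower order in }X)$ has leading coefficient $a\ne0$, so for each nonzero $r\in R$ the leading coefficient of $xr$ is $a\sig^s(\lc(r))\ne0$ and $\deg(xr)=s+\deg(r)\ge s>0$; hence $xR\cap A=0$. As $x$ is a normal $\HH$-eigenvector, $xR=Rx$ is an $\HH$-ideal contracting to $0$ in $A$, so Corollary \ref{uniquePcontract0} forces $xR\subseteq(X-d)R^*\cap R=P$, whence $x\in P$.

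The two displayed identities amount to the single relation $da=\eta\la_\circ^s ad$: from it and $\de(a)=da-\eta ad$ one reads off $\de(a)=\eta(\la_\circ^s-1)ad$, hence $d=\eta^{-1}(\la_\circ^s-1)^{-1}a^{-1}\de(a)$, and $\de(a)a=\eta\la_\circ^s a\de(a)$ follows by direct substitution. To obtain the relation I would introduce the automorphism $\Theta=d(\cdot)d^{-1}$ of the division ring $A^*$ (here $d\ne0$, since $s\ge1$ forces $\de(a)\ne0$). A $q$-binomial (Gauss) computation with $\de_d$ yields $\de^{s+1}(a)=\bigl[\prod_{j=0}^{s}(\Theta-\eta\la_\circ^j)\bigr](a)\,d^{\,s+1}$; as $\de^{s+1}(a)=0$ and the scalars $\eta\la_\circ^j$ are pairwise distinct ($\la_\circ$ not being a root of unity), $a=\sum_{j=0}^{s}a_j$ with $\Theta(a_j)=\eta\la_\circ^j a_j$. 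Reducing $x\in(X-d)R^*$ through the evaluation homomorphism $\epsilon\colon\widehat{R^*}\to A^*$, $X\mapsto d$ (well defined because $\de=\de_d$ and $d$ is invertible), gives $\epsilon(x)=\epsilon(\theta(a))\,d^{\,s}=0$; evaluating $\epsilon\circ\theta$ on each $a_j$ and using $\sum_{l=0}^{j}(-1)^l\la_\circ^{\binom l2}\binom jl_{\la_\circ}=\prod_{i=0}^{j-1}(1-\la_\circ^i)$ collapses the sum to $a_0$, so $a_0=0$. For $s=1$ this already gives $a=a_1$, i.e. $\Theta(a)=\eta\la_\circ a$; in general one descends in $s$ to eliminate the middle components $a_1,\dots,a_{s-1}$ and conclude $a=a_s$, i.e. $da=\eta\la_\circ^s ad$.

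For part (b), write a degree-$1$ element of $P$ with leading coefficient $a$ as $f=aX+a_0$; applying $\epsilon$ gives $a_0=-ad\in A$. The commutation $da=\eta\la_\circ^s ad$ (from (a) when $s>0$, and directly from $\de(a)=0$ when $s=0$) shows that in the domain $R/P$ one has $\ol X\,\ol a=\eta\la_\circ^s\,\ol a\,\ol X$, which together with $\ol a\,\ol A=\ol A\,\ol a$ proves $\ol a$ normal. For the last assertion, assume $s=0$, so $\de(a)=0$ and $da=\eta ad$. I would show $ad^{\,k}\in A$ for all $k$ by induction, using $\de(ad^{\,k})=\sig(a)\de(d^{\,k})=\eta(1-\la_\circ^{\,k})ad^{\,k+1}$ and the regularity of $\eta(1-\la_\circ^{\,k})$; then local nilpotence of $\de$ on $A$ applied to $ad$ gives $\de^{N}(ad)=\eta^{N}\prod_{k=1}^{N}(1-\la_\circ^{\,k})\,ad^{\,N+1}=0$, and as $\la_\circ$ is not a root of unity the scalar is nonzero, forcing $ad^{\,N+1}=0$ and hence $d=0$ in $A^*$. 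Thus $\de=\de_d=0$, so $R=A[X;\sig]$ and $P=XR$.

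The step I expect to be the real obstacle is the passage from $a_0=0$ to full purity $a=a_s$ in part (a): the membership $x\in P$ annihilates only the $\de$-constant component, and removing the intermediate components $a_1,\dots,a_{s-1}$ is where the interplay of the normality of $a$, the maximality of $s$, and the $q$-skew calculus must be exploited carefully, with Lemma \ref{Ric.arg} the natural tool for excluding the spurious multiplicative relations that would otherwise survive. Everything else (the degree argument for $x\in P$, the reduction modulo $(X-d)$ for (b), and the local-nilpotence argument giving $d=0$ when $s=0$) is routine once this commutation relation is in hand.
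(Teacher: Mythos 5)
Your handling of the normality of $x$, of the membership $x\in P$, and of part (b) tracks the paper's argument closely (for the final assertion of (b) you replace the paper's appeal to Lemma \ref{Ric.arg} by a direct induction giving $\de^N(ad)=\eta^N\prod_{k=1}^N(1-\la_\circ^k)\,ad^{N+1}=0$, which is a perfectly sound variant). The genuine gap is exactly where you suspect it: the derivation of $d=\eta^{-1}(\la_\circ^s-1)^{-1}a^{-1}\de(a)$, equivalently $da=\eta\la_\circ^s ad$. Your mechanism --- decompose $a=\sum_{j=0}^s a_j$ into eigenvectors of $\Theta=d(\cdot)d^{-1}$ and evaluate $x\in(X-d)R^*$ at $X\mapsto d$ --- cannot be completed as described, and not merely because the bookkeeping is delicate. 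Computing the action of your evaluation $\epsilon\circ\theta$ on the component $a_k$ (eigenvalue $\eta\la_\circ^k$) produces the scalar $\sum_{l=0}^{k}(-1)^l\la_\circ^{\binom{l}{2}}\binom{k}{l}_{\la_\circ}=\prod_{i=0}^{k-1}(1-\la_\circ^i)$ --- the very identity you quote --- and this vanishes for \emph{every} $k\ge 1$ because of the factor $1-\la_\circ^0$. So the single equation $\epsilon(x)=0$ carries exactly the information $a_0=0$ and constrains $a_1,\dots,a_{s-1}$ not at all; no descent in $s$ or application of Lemma \ref{Ric.arg} to this equation can eliminate the middle components. Since part (b) for $s>0$ feeds on the formula for $d$, the gap propagates there as well.

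The missing idea in the paper's proof is to apply Proposition \ref{R*nonHsimple}(c) not to $P^*$ but to the principal $\HH$-ideal $I^*:=R^*x$ generated by the normal element $x$ itself. Since the leading coefficient $a$ of $x$ is a unit in $A^*$, the minimum degree of nonzero elements of $I^*$ is $s$, and $a^{-1}x=X^s+a^{-1}cX^{s-1}+[\text{lower terms}]$ is a monic element of that degree, where $c=(1-\la_\circ)^{-1}\de\sig^{-1}(a)=\eta^{-1}(1-\la_\circ)^{-1}\de(a)$ is read off directly from the formula \eqref{cauchonmap} for the Cauchon map. Proposition \ref{R*nonHsimple}(c) then yields $d=(\la_\circ-1)(1-\la_\circ^s)^{-1}a^{-1}c=\eta^{-1}(\la_\circ^s-1)^{-1}a^{-1}\de(a)$ in one line, and substituting this expression for $d$ into $\de(a)=da-\eta ad$ gives $\eta\la_\circ^s\de(a)=a^{-1}\de(a)a$, i.e.\ $\de(a)a=\eta\la_\circ^s a\de(a)$. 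In short, the datum that pins down $d$ is the second-highest coefficient of $\theta(a)X^s$, precisely the information your evaluation argument discards.
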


\begin{proof} The ideal $P$ localizes to a nonzero $\HH$-prime $P^*$ of $R^*$ such that $P^*\cap R = P$, and $P^* = (X-d)R^*$ by Proposition \ref{R*nonHsimple}(b). 

(a) By Lemma \ref{firstnormal}, $x$ is a normal $\HH$-eigenvector in $R$. 
Now $I := Rx$ is a nonzero $\HH$-ideal of $R$, and $I\cap A = 0$ because $\deg x = s>0$. By Corollary \ref{uniquePcontract0}, $I \subseteq P$, whence $x\in P$.

Note that $x = a X^s + c X^{s-1} + [\text{lower terms}]$, where 
$$
c = (1-\la_\circ)^{-1} \de\sig^{-1}(a) = \eta^{-1} (1-\la_\circ)^{-1} \de(a).
$$
The ideal $I$ localizes to a proper nonzero $\HH$-ideal $I^* := R^* x$ in $R^*$, and $s$ is the minimum degree for nonzero elements of $I^*$. Since $a^{-1} x$ is a monic element of $I^*$ with degree $s$, Proposition \ref{R*nonHsimple}(c) implies that 
$d = (\la_\circ-1) (1-\la_\circ^s)^{-1} a^{-1} c =  \eta^{-1} (\la_\circ^s-1)^{-1} a^{-1} \de(a)$. 

Observe that
$$
\de(a) = da - \eta ad =  \eta^{-1} (\la_\circ^s-1)^{-1} \bigl( a^{-1} \de(a) a - \eta a a^{-1} \de(a) \bigr),
$$
whence  $\eta (\la_\circ^s-1) \de(a) = a^{-1}\de(a)a - \eta \de(a)$, and therefore $\eta \la_\circ^s \de(a) = a^{-1}\de(a)a$.

(b) Assume that $aX + c \in P$ for some $c\in A$. Then $X + a^{-1}c$ is a monic element of $P^*$ with degree $1$. Since $P^*$ is proper, it contains no nonzero elements of degree $0$. Hence, we again apply Proposition \ref{R*nonHsimple}(c), obtaining $d = -a^{-1}c$.

If $s=0$, then $\de(a) = 0$, whence $\de^m(d) = - \eta^{-m} a^{-1} \de^m(c) = 0$ for some $m \in \Znn$. Since $\de(d) = dd - \sig(d)d = (1-\la_\circ)d^2$, it follows from Lemma \ref{Ric.arg} that $\de(d) = 0$. But $1-\la_\circ \ne 0$, so we obtain $d=0$. Thus $\de = \de_0 \equiv 0$ in this case. We then have $P = XR$. Moreover, $aX = \eta^{-1} Xa$, so $a$ is normal in $R$, whence also $a+P$ is normal in $R/P$.

Finally, assume that $s> 0$. By part (a), we have
$$
-a^{-1}c = d = \eta^{-1} (\la_\circ^s-1)^{-1} a^{-1} \de(a),
$$
whence $\de(a) = \eta (1-\la_\circ^s) c$. Since $aX+c \in P$, it follows that
$$
Xa = \eta aX + \eta (1-\la_\circ^s) c \equiv \eta aX + \eta (1-\la_\circ^s) (-aX) = \eta \la_\circ^s aX \quad \pmod{P}.
$$
As $a$ is already normal in $A$, we conclude that $a+P$ is normal in $R/P$.
\end{proof}

\begin{proposition} \label{somesep}
Assume that every nonzero $\HH$-prime of $A$ contains a normal $\HH$-eigen\-vec\-tor. 

If $P \subsetneq Q$ are $\HH$-primes of $R$ with $P\cap A = 0$, there exists a normal $\HH$-eigenvector $u$ of $R/P$ such that $u\in Q/P$.
\end{proposition}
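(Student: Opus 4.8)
The plan is to produce the normal $\HH$-eigenvector by analyzing the contraction $\mathfrak q := Q\cap A$, which is a $\de$-stable $\HH$-prime of $A$ by \S\ref{specstrat.bits}(ii), and splitting into the cases $\mathfrak q\neq 0$ and $\mathfrak q = 0$. First I would record the constraints coming from $P\cap A = 0$. If $P\neq 0$, then by \coref{uniquePcontract0} $R^*$ is not $\HH$-simple and $P = (X-d)R^*\cap R$ is the unique nonzero $\HH$-prime contracting to $0$; since $P\subsetneq Q$, this already forces $\mathfrak q\neq 0$. If instead $\mathfrak q = 0$, then both $P$ and $Q$ contract to $0$, so by \S\ref{specstrat.bits}(iii) and \coref{uniquePcontract0} they both lie in $\{0,\,(X-d)R^*\cap R\}$ (in particular $R^*$ is not $\HH$-simple), and $P\subsetneq Q$ forces $P = 0$ and $Q = (X-d)R^*\cap R$.

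In the case $\mathfrak q\neq 0$, the hypothesis gives a normal $\HH$-eigenvector $a\in\mathfrak q$; let $s$ be maximal with $\de^s(a)\neq 0$. If $P = 0$, I would take $u := \theta(a)X^s$. By \leref{firstnormal} this is a normal $\HH$-eigenvector of $R$, and since $\mathfrak q$ is stable under $\sig$ (it is $\HH$-stable) and under $\de$, every coefficient $\de^l\sig^{-l}(a)$ appearing in $\theta(a)X^s$ lies in $\mathfrak q\subseteq Q$; hence $u\in Q$, and $u\neq 0$ as $P = 0$. If $P\neq 0$, then $P = (X-d)R^*\cap R$ and $R/P\cong A[d]\subseteq A^*$ via $X\mapsto d$; here I would take $u := a+P$. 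The key point is that $a$ skew-commutes with $d$: when $s = 0$ this is immediate from $\de(a) = da - \sig(a)d = 0$, and when $s>0$ it follows from the relations $d = \eta^{-1}(\la_\circ^s-1)^{-1}a^{-1}\de(a)$ and $\de(a)a = \eta\la_\circ^s a\de(a)$ of \leref{lcoeff=a}(a) (with $\eta$ the $h_\circ$-eigenvalue of $a$). Combined with the normality of $a$ in $A$, this shows $a$ is normal in $A[d]$, i.e.\ $a+P$ is normal in $R/P$; it lies in $Q/P$ and is nonzero because $a\notin P$.

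In the case $\mathfrak q = 0$ (so $P = 0$ and $Q = (X-d)R^*\cap R$), if $d = 0$ then $\de = 0$, $Q = XR$, and $u := X$ works. Assume $d\neq 0$. First I would note $d\notin A$: otherwise local nilpotence of $\de$ gives $\de^m(d) = 0$ for some $m$, and since $\de(d) = (1-\la_\circ)d^2$, \leref{Ric.arg} forces $\de(d) = 0$, whence $d = 0$. Next, let $L\subseteq A$ be the two-sided $\HH$-ideal of leading coefficients of the degree-one elements of $Q$; a short computation identifies $L = \{g\in A : dg\in A\}$, which is nonzero (clear a right denominator of $d$) and proper (as $d\notin A$). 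Let $\mathfrak p_1,\dots,\mathfrak p_k$ be the primes minimal over $L$; these are nonzero $\HH$-primes by \S\ref{specstrat.bits}(v), each containing a normal $\HH$-eigenvector $a_i$. Setting $a := (a_1\cdots a_k)^n$ for $n$ large enough that $(\sqrt L)^n\subseteq L$, I obtain a normal $\HH$-eigenvector lying in $L$. Since $a\in L$ is the leading coefficient of a degree-one element of $Q$, \leref{lcoeff=a}(b) applies; because $d\neq 0$ we cannot have $s = 0$ (that would give $\de\equiv 0$ and hence $d = 0$), so $s>0$ and $\de(a)\neq 0$. Then \leref{lcoeff=a}(a) yields that $u := \theta(a)X^s$ is a normal $\HH$-eigenvector of $R$ with $u\in Q$ and $u\neq 0$.

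The main obstacle is precisely this last case $\mathfrak q = Q\cap A = 0$: here $Q$ meets $A$ trivially, so no normal eigenvector can be lifted directly from $A$, and one must manufacture a normal $\HH$-eigenvector $a$ of $A$ with $\de(a)\neq 0$ in order to feed \leref{lcoeff=a}(a). The device for this is the leading-coefficient ideal $L$ together with the product-of-minimal-primes trick to land a normal eigenvector inside $L$, after which \leref{lcoeff=a}(b) and $d\neq 0$ rule out $\de(a) = 0$. Checking that $L$ equals $\{g : dg\in A\}$ and is a genuine two-sided $\HH$-ideal is the one computational point requiring care, and the reduction $d\neq 0\Rightarrow d\notin A$ via \leref{Ric.arg} is what makes $L$ proper.
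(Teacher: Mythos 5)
Your proof is correct, and its overall skeleton (splitting on whether $P$ and $Q\cap A$ vanish, using $\theta(a)X^s$ via Lemmas \ref{firstnormal} and \ref{lcoeff=a}, and in the hardest case $Q\cap A=0$ manufacturing a normal $\HH$-eigenvector inside the leading-coefficient ideal of $Q$ by the product-of-minimal-primes trick) is the same as the paper's. The one genuine divergence is the subcase $P\ne 0$: the paper forms the leading-coefficient ideal $J$ of $P$ itself, intersects it with $Q\cap A$, and lands a normal $\HH$-eigenvector $a$ in $J\cap Q$ precisely so that Lemma \ref{lcoeff=a}(b) applies to give normality of $a+P$ in $R/P$; you instead take \emph{any} normal $\HH$-eigenvector $a\in Q\cap A$ and verify normality of $a+P$ by hand, using the identification $R/P\cong A\langle d\rangle\subseteq A^*$ (the kernel of $X\mapsto d$ is $(X-d)R^*\cap R=P$) together with the skew-commutation $da=\eta\la_\circ^{s}ad$ extracted from the two relations in Lemma \ref{lcoeff=a}(a) (and from $\de(a)=da-\sig(a)d=0$ when $s=0$). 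This buys you a shorter route in that subcase — no second leading-coefficient ideal and no second minimal-primes argument — at the cost of an explicit computation in $A^*$; the paper's route keeps all normality verifications funneled through Lemma \ref{lcoeff=a}(b). Two small points to fix: your identification of $L$ should read $L=\{g\in A: gd\in A\}$ rather than $dg\in A$ (with left coefficients, $gX+e\in Q$ forces $e=-gd$, so membership in $R$ is the condition $gd\in A$; the set you wrote is also a two-sided $\HH$-ideal but is not the leading-coefficient ideal), and ``$(\sqrt L)^n\subseteq L$'' should be phrased via the prime radical, i.e.\ some product of the minimal primes over $L$ (with repetitions) lies in $L$, which is all you use.
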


\begin{proof}  Recall that $Q\cap A$ is a $\de$-stable $\HH$-prime of $A$.

Assume first that $P \ne 0$. Then $0$ and $P$ are two $\HH$-primes of $R$ that contract to $0$ in $A$, so $Q\cap A \ne 0$ by \S\ref{specstrat.bits}(iii).

Now $P$ localizes to a nonzero $\HH$-prime $P^*$ in $R^*$, and $P^* = R^*(X-d)$ by Proposition \ref{R*nonHsimple}(b). Writing $d = b^{-1}c$ for some $b,c\in A$ with $b\ne 0$, we have $bX-c = b(X-d) \in P^*\cap R = P$. Thus, the $\HH$-ideal 
$$J := \{ a \in A \mid aX+e \in P\ \text{for some}\ e \in A \}$$
is nonzero, as is then $J \cap (Q \cap A) = J\cap Q$. 

There exist primes $P_1,\dots,P_r$ in $A$ minimal over $J\cap Q$ such that $P_1P_2 \cdots P_r \subseteq J\cap Q$. Since $J\cap Q$ is an $\HH$-ideal, these $P_i$ are $\HH$-primes of $A$ (\S\ref{specstrat.bits}(v)). By hypothesis, each $P_i$ contains a normal $\HH$-eigenvector $a_i$, and thus $a := a_1a_2 \cdots a_r$ is a normal $\HH$-eigenvector of $A$ that lies in $J\cap Q$. Since $a$ is in $J$, it is the leading coefficient of an element of $P$ of degree $1$. By Lemma \ref{lcoeff=a}(b), the coset $u := a+P$ is a normal $\HH$-eigenvector of $R/P$. Moreover, $u \in Q/P$ because $a\in Q$.

Now assume that $P = 0$. If $Q\cap A \ne 0$, then by hypothesis, $Q\cap A$ contains a normal $\HH$-eigenvector $a$ of $A$. Then $\de^l(a) \in Q\cap A$ for all $l\in \Znn$, whence the element $u := \theta(a)X^s$ lies in $Q$, where $s\in \Znn$ is minimal such that $\theta(a) X^s \in R$. By Lemma \ref{firstnormal}, $u$ is a normal $\HH$-eigenvector in $R$.

Finally, suppose that $Q\cap A = 0$. As above, the $\HH$-ideal 
$$J := \{ a \in A \mid aX+e \in Q\ \text{for some}\ e \in A \}$$
is nonzero. If $J=A$, then $1\in J$, while if $J \ne A$, then $J$ contains a product of nonzero $\HH$-primes of $A$. In either case, there is a normal $\HH$-eigenvector $a$ of $A$ that lies in $J$. Let $s\in \Znn$ be maximal such that $\de^s(a) \ne 0$.

If $s>0$, then by Lemmas \ref{firstnormal} and \ref{lcoeff=a}(a), $u := \theta(a)X^s$ is a normal $\HH$-eigenvector of $R$ that lies in $Q$. On the other hand, if $s=0$, Lemma  \ref{lcoeff=a}(b) shows that $\de \equiv 0$ and $Q = XR$. In this case, $u := X$ is a normal $\HH$-eigenvector of $R$ that lies in $Q$. 
\end{proof}

\subsection{Carrying normal $\HH$-separation from $A$ to $R$}

\begin{definition} Suppose $C$ is a $\KK$-algebra equipped with an $\HH$-action by $\KK$-algebra automorphisms. Following \cite[\S5.2]{murcia.proc}, we say that $\Hspec C$ has \emph{normal $\HH$-separation} provided that for any proper inclusion $P \subsetneq Q$ of $\HH$-prime ideals of $C$, there exists a normal $\HH$-eigenvector of $C/P$ which lies in $Q/P$.

The condition of normal $\HH$-separation only requires a suitable supply of $\HH$-eigenvectors which are normal in appropriate factor rings. It does not require these normal elements to normalize via actions of elements of $\HH$. That requirement leads to the following stronger condition.  We say that $\Hspec C$ has \emph{$\HH$-normal separation} if, for any proper inclusion $P \subsetneq Q$ of $\HH$-prime ideals of $C$, the ideal $Q/P$ contains a nonzero element $u$ which is \emph{$\HH$-normal} in $C/P$, meaning that $u$ is normal and there is some $h\in \HH$ such that $uc = (h\cdot c)u$ for all $c\in C/P$.
\end{definition}

\begin{theorem}  \label{carryover.normalHsep}
If $\Hspec A$ has normal $\HH$-separation, then so does $\Hspec R$.
\end{theorem}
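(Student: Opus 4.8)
The plan is to reduce the statement to the already-established case treated in Proposition \ref{somesep}, where the smaller prime contracts to zero in the coefficient ring, by factoring out the contraction of $P$ to $A$.

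First I would set $P_0 := P \cap A$. By \S\ref{specstrat.bits}(ii), $P_0$ is a $\delta$-stable $\HH$-prime of $A$, and since $R$ is a Cauchon extension every $\HH$-prime of $A$ is completely prime, so $P_0$ is completely prime and $\bar A := A/P_0$ is a noetherian domain. Because $P_0$ is $\HH$-stable we have $\sigma(P_0) = (h_\circ\cdot)(P_0) = P_0$, and $P_0$ is $\delta$-stable by the above; hence $P_0 R = \bigoplus_{n\ge 0} P_0 X^n$ is a two-sided $\HH$-stable ideal of $R$, contained in $P$, and
$$
\bar R := R/P_0 R \;\cong\; \bar A[X; \bar\sigma, \bar\delta],
$$
a skew polynomial extension of $\bar A$. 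I would then check that $\bar R$ is again a Cauchon extension: local nilpotence of $\bar\delta$, the $\HH$-action with $\bar X$ an eigenvector and $\bar A$ stable, and the element $h_\circ$ inducing $\bar\sigma$ with the non-root-of-unity eigenvalue $\lambda_\circ$ all pass to the quotient, while every $\HH$-prime of $\bar A$ is completely prime because the $\HH$-primes of $\bar A$ correspond to the $\HH$-primes of $A$ containing $P_0$, each of which is completely prime in $A$.

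Next I would transport the hypothesis. Writing $\bar P := P/P_0 R$ and $\bar Q := Q/P_0 R$, these are $\HH$-primes of $\bar R$ with $\bar P \subsetneq \bar Q$, and since $P_0 R \cap A = P_0$ and $P \cap A = P_0$ we get $\bar P \cap \bar A = 0$. To apply Proposition \ref{somesep} to $\bar R$ and the pair $\bar P \subsetneq \bar Q$, it remains to verify its standing hypothesis, namely that every nonzero $\HH$-prime of $\bar A$ contains a normal $\HH$-eigenvector. A nonzero $\HH$-prime of $\bar A$ has the form $P'/P_0$ for an $\HH$-prime $P' \supsetneq P_0$ of $A$; applying the assumed normal $\HH$-separation of $\Hspec A$ to the inclusion $P_0 \subsetneq P'$ produces a normal $\HH$-eigenvector of $A/P_0 = \bar A$ lying in $P'/P_0$, exactly as required.

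With both hypotheses in place, Proposition \ref{somesep} applied to $\bar R$ yields a normal $\HH$-eigenvector $u$ of $\bar R/\bar P$ lying in $\bar Q/\bar P$. Under the identifications $\bar R/\bar P = R/P$ and $\bar Q/\bar P = Q/P$, this $u$ is the desired normal $\HH$-eigenvector of $R/P$ inside $Q/P$, which establishes normal $\HH$-separation for $\Hspec R$. The substantive content has already been absorbed into Proposition \ref{somesep} (which in turn rests on the normal-element constructions of Lemmas \ref{firstnormal} and \ref{lcoeff=a} and the structure of $\HH$-primes in \S\ref{specstrat.bits}); the only point requiring genuine care in the present argument is the bookkeeping that $\bar R = R/P_0 R$ inherits the full Cauchon-extension structure, which is the step I expect to be the main (though routine) obstacle.
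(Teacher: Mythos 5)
Your proposal is correct and follows essentially the same route as the paper: the paper's proof likewise sets $P_0 = P\cap A$, passes to $A/P_0$ and $R/P_0R$ to reduce to the case $P\cap A = 0$, notes that normal $\HH$-separation of $\Hspec A$ then supplies a normal $\HH$-eigenvector in every nonzero $\HH$-prime of the coefficient ring, and concludes by Proposition \ref{somesep}. Your extra verification that $R/P_0R$ inherits the Cauchon-extension structure is exactly the routine bookkeeping the paper leaves implicit in the phrase ``we may replace.''
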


\begin{proof} Let $P \subsetneq Q$ be $\HH$-primes of $R$. Then $P_0 := P \cap A$ is a $\de$-stable $\HH$-prime of $A$ (\S\ref{specstrat.bits}(ii)), and we may replace $A$, $R$, $P$, $Q$ by $A/P_0$, $R/P_0R$, $P/P_0R$, $Q/P_0R$, respectively. Thus, there is no loss of generality in assuming that $P\cap A = 0$.

The hypothesis of normal $\HH$-separation now implies that every nonzero $\HH$-prime of $A$ contains a normal $\HH$-eigenvector of $A$. Therefore, by Proposition \ref{somesep}, there exists a normal $\HH$-eigen\-vector $u$ of $R/P$ such that $u\in Q/P$. This verifies normal $\HH$-separation in $\Hspec R$.
\end{proof}

\begin{question}  \label{Q1:Hnormalsep}
If $\Hspec A$ has $\HH$-normal separation, does $\Hspec R$ have $\HH$-normal separation?
\end{question}

%%%%%%%%%%%%%%%

\sectionnew{Proof of the main theorem}  \label{proofmaintheorem}

Observe that if $R =  \KK[x_1][x_2;\sigma_2,\delta_2]\cdots[x_N;\sigma_N,\delta_N]$ is a quantum nilpotent algebra, then $R_j$ is a Cauchon extension of $R_{j-1}$ for all $j \in \gc 2,N \dc$. (The complete primeness of $\HH$-primes follows from \S\ref{specstrat.bits}(i) by induction.) 

\begin{theorem}  \label{normsepCGL}
If $R$ is a quantum nilpotent algebra, then $\Spec R$ has normal separation.
\end{theorem}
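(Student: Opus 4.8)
The plan is to obtain full normal separation for $\Spec R$ from the weaker, torus-equivariant property that $\Hspec R$ has normal $\HH$-separation, which is exactly what the constructions of the previous two sections are designed to deliver. So I would organize the argument in two stages: first establish normal $\HH$-separation for $\Hspec R$ by induction on the number $N$ of indeterminates, and then bridge from $\HH$-primes to arbitrary primes using the $\HH$-stratification of $\Spec R$.

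For the first stage, recall from the opening of this section that each $R_j = R_{j-1}[x_j;\sig_j,\de_j]$ is a Cauchon extension of $R_{j-1}$, so that the standing hypotheses of \S\ref{basicR} and \S\ref{cauchonext} hold with $A = R_{j-1}$, $X = x_j$, and $h_\circ = h_j$. The base case $R_0 = \KK$ is a field, hence $\HH$-simple, so $\Hspec R_0$ has normal $\HH$-separation vacuously. For the inductive step I would assume $\Hspec R_{j-1}$ has normal $\HH$-separation and apply Theorem~\ref{carryover.normalHsep} to the Cauchon extension $R_j = R_{j-1}[x_j;\sig_j,\de_j]$ to conclude the same for $\Hspec R_j$. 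Iterating $N$ times shows that $\Hspec R$ has normal $\HH$-separation.

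For the second stage, let $P \subsetneq Q$ be arbitrary primes of $R$. Since $J := (P:\HH)$ is an $\HH$-prime contained in $P$ (\S\ref{specstrat.bits}(iv)), I would factor it out; as normal $\HH$-separation is inherited by the factor $R/J$, there is no loss in assuming $(P:\HH)=0$, so that $R$ is $\HH$-prime, hence a domain (\S\ref{specstrat.bits}(i)), and $P$ contains no nonzero $\HH$-eigenvector (any such would generate a nonzero $\HH$-ideal inside $P$). Now I split on $(Q:\HH)$. If $(Q:\HH) \ne 0$, then $0 = (P:\HH) \subsetneq (Q:\HH)$ are $\HH$-primes, so normal $\HH$-separation yields a normal $\HH$-eigenvector $u$ of $R$ lying in $(Q:\HH) \subseteq Q$; its image is then a nonzero normal element of $R/P$ inside $Q/P$, since $P$ absorbs no nonzero $\HH$-eigenvector. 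If instead $(Q:\HH) = 0$, then $P$ and $Q$ lie in a single $\HH$-stratum, which by the Goodearl--Letzter stratification theory is order-isomorphic (indeed homeomorphic) to $\Spec Z$, where $Z$ is the commutative center of the localization $R[E^{-1}]$ of $R$ at the set $E$ of nonzero $\HH$-eigenvectors. Here $P,Q$ correspond to primes $\mathfrak p \subsetneq \mathfrak q$ of $Z$, and I would take a central element $z \in \mathfrak q \setminus \mathfrak p$ and descend it to a normal element of $R/P$ inside $Q/P$ by clearing denominators against the normal $\HH$-eigenvectors furnished by normal $\HH$-separation.

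The main obstacle is exactly this same-stratum descent. Because an arbitrary $\HH$-eigenvector of $R$ need not be normal, one cannot simply multiply the central element $z$ by a denominator in $E$ and retain normality; the fix must exploit the structure of $Z$ in the CGL setting (where it is generated by Laurent monomials in normal $\HH$-eigenvectors) in order to write $z$, up to a normal denominator, as a normal element downstairs. Everything else — the reduction modulo $(P:\HH)$, the different-stratum case, and the induction — is routine once the carry-over Theorem~\ref{carryover.normalHsep} is in hand, and the homological conditions needed to then invoke the catenarity criterion of Lenagan and the first author are, as remarked in the introduction, available from existing results.
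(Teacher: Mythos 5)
Your first stage reproduces the paper's argument exactly: the induction on $N$ through the chain of Cauchon extensions $R_{j-1} \subseteq R_j$, with the trivial base case $R_0 = \KK$ and Theorem~\ref{carryover.normalHsep} as the inductive step, is precisely how the paper shows that $\Hspec R$ has normal $\HH$-separation. Your reduction in the second stage to the case $(P:\HH) = 0$, and your treatment of the case $(Q:\HH) \neq 0$ (a normal $\HH$-eigenvector $u \in (Q:\HH)$ supplied by normal $\HH$-separation cannot lie in $P$, since $RuR$ would then be a nonzero $\HH$-ideal inside $(P:\HH) = 0$), are also correct.

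The gap is the one you flag yourself: the same-stratum case $(P:\HH) = (Q:\HH) = 0$ is not actually proved. The paper does not attempt this descent by hand; it invokes \cite[Theorem 5.3]{murcia.proc}, a general theorem for noetherian algebras with a rational torus action whose content is exactly that normal $\HH$-separation of $\Hspec R$ implies normal separation of $\Spec R$, the same-stratum case being the substance of that result. Your proposed repair rests on an unestablished structural claim --- that the centre of the localization at the nonzero $\HH$-eigenvectors is generated by Laurent monomials in \emph{normal} $\HH$-eigenvectors --- which is a nontrivial CGL-specific fact (essentially the quantum-torus picture arising from Cauchon's deleting-derivations procedure) that appears nowhere in this paper and is not needed by it. Moreover, even granting that claim, you would still need to produce, for a central element $z$ of the localization, a denominator $e$ that is a \emph{normal} $\HH$-eigenvector with $ze \in R$: the set $\{a \in R : za \in R\}$ is a nonzero $\HH$-ideal, but normal $\HH$-separation only furnishes normal $\HH$-eigenvectors lying in $\HH$-primes above a given one, not inside an arbitrary nonzero $\HH$-ideal, so this step does not follow from what you have established. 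The efficient course is to cite the general bridging theorem rather than to reprove it in the CGL setting.
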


\begin{proof} Write $R$ as in \eqref{itOre}, and let $\HH$ be as in Definition \ref{CGLdef}. Obviously $\Hspec R_0$ has normal $\HH$-separation. By induction on $N$, Theorem \ref{carryover.normalHsep} implies that $\Hspec R$ has normal $\HH$-separation. Therefore, by \cite[Theorem 5.3]{murcia.proc}, $\Spec R$ has normal separation.
\end{proof}

\begin{question}  \label{Q2:Hnormalsep}
If $R$ is a quantum nilpotent algebra, does $\Hspec R$ have $\HH$-normal separation?

As far as inclusions $0 \subsetneq Q$ of $\HH$-primes are concerned, $\HH$-normal separation is known to hold provided the torus $\HH$ is maximal in the sense of \cite[\S5.2]{GYncufd}. Namely, in this case all normal elements in $R$ itself are $\HH$-normal by \cite[Corollary 5.4]{GYncufd}.
\end{question}

We now address homological properties of a quantum nilpotent algebra $R$, some of which are obtained by filtering $R$ so that the associated graded ring $\gr R$ is a quantum affine space.

\begin{definition}  \label{qaffine}
A matrix $\bfq = (q_{ij}) \in M_N(\KK)$ is \emph{multiplicatively skew-symmetric} provided $q_{ii} = 1$ for all $i$ and $q_{ji} = q_{ij}^{-1}$ for all $i$, $j$. Given such a matrix, define the algebra
$$\OO_\bfq(\KK^N) := \KK \langle x_1,\dots,x_N \mid x_i x_j = q_{ij} x_j x_i \ \forall\; i,j \in \gc 1,N \dc \rangle.$$
The algebra $\OO_\bfq(\KK^N)$ is a quantized coordinate ring of the affine space $\Aset^N$, or a \emph{quantum affine space} for short. It is trivially a quantum nilpotent algebra.
\end{definition}

\begin{notation} \label{la-notation} If $R$ is a quantum nilpotent algebra as in Definition \ref{CGLdef}, there are scalars $\la_{ji} \in \kx$ such that $\sig_j(x_i) =  \la_{ji} x_i$ for $1\le i < j\le N$. These are the below-diagonal entries of a multiplicatively skew-symmetric matrix $\bfla = (\la_{ij}) \in M_N(\KK)$.
\end{notation}

\begin{lemma}  \label{filt}
Let $R$ be an iterated skew polynomial algebra of length $N$ as in \eqref{itOre}, and assume there is a multiplicatively skew-symmetric matrix $\bfq = (q_{ij}) \in M_N(\KK)$ such that $\sig_j(x_i) =  q_{ji} x_i$ for $1\le i < j\le N$. Then there exist an exhaustive, ascending, locally finite $\KK$-algebra filtration $(R_n)_{n\ge0}$ on $R$ and a $\KK$-al\-ge\-bra $\Znn$-grading on $\OO_{\bfq}(\KK^N)$ such that

{\rm(a)} $R_0 = \KK$.

{\rm(b)} The canonical generators $x_1,\dots,x_N$ of $\OO_{\bfq}(\KK^N)$ are homogeneous with positive degree.

{\rm(c)} $\gr R$ and $\OO_{\bfq}(\KK^N)$ are isomorphic as graded $\KK$-algebras, where the principal symbols of the $x_i$ in $R$ map to the $x_i$ in $\OO_{\bfq}(\KK^N)$.
\end{lemma}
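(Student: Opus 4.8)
The plan is to construct the filtration on $R$ directly from the skew-polynomial structure, using a weight vector that separates the commutators from the derivations. Concretely, I would assign to each generator $x_i$ a positive integer weight $w_i$ and define $\deg(x_1^{a_1} \cdots x_N^{a_N}) = \sum_i a_i w_i$ on the standard monomial basis of $R$ (which exists since $R$ is an iterated Ore extension and hence a free module over $\KK$ with the lexicographically ordered monomials as a basis). The key point is to choose the weights so that the derivation terms $\de_j$ are \emph{strictly} lower in degree than the leading skew-commutator term. Recall that in $R$ one has $x_j x_i = \la_{ji}^{-1} x_i x_j + (\text{lower terms coming from } \de_j)$ for $i<j$; since each $\de_j$ is locally nilpotent and lowers the $x_j$-content, the extra terms are polynomials in $x_1,\dots,x_{j-1}$ of total weight that I must arrange to be strictly smaller than $w_i + w_j$.

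First I would make the choice of weights explicit and verify it forces the desired strict inequality. Because $R$ is built as a tower and each $\de_j$ takes values in $R_{j-1} = \KK\langle x_1,\dots,x_{j-1}\rangle$, a rapidly growing weight sequence — for instance $w_j \gg w_1 + \cdots + w_{j-1}$, so something like $w_j = 2^j$ — guarantees that any monomial appearing in $\de_j(x_i)$, being supported on indices $< j$, has weight at most $w_1 + \cdots + w_{j-1} < w_j < w_i + w_j$. With this in hand I would define $R_n = \Span_\KK\{\text{standard monomials of weight} \le n\}$ and check (a): $R_0 = \KK$ is immediate since all $w_i > 0$. Exhaustiveness and local finiteness are clear: every monomial has a well-defined finite weight, and there are finitely many monomials below any given weight.

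Next I would verify that $(R_n)$ is an algebra filtration, i.e. $R_m R_n \subseteq R_{m+n}$. This reduces to checking that the product of two standard monomials, rewritten in standard form, lands in the correct filtration level. The only nontrivial rewriting is moving an $x_j$ past an $x_i$ with $i<j$ when the factors are out of order; by the previous paragraph the leading term $\la_{ji}^{-1} x_i x_j$ preserves weight exactly and every correction term drops the weight strictly, so the total weight is non-increasing under reordering and the filtration is multiplicative. This same computation identifies the associated graded algebra: in $\gr R$ all the strictly-lower derivation corrections vanish, leaving precisely the relations $\bar{x}_j \bar{x}_i = q_{ji}^{-1} \bar{x}_i \bar{x}_j$, i.e. $\bar{x}_i \bar{x}_j = q_{ij} \bar{x}_j \bar{x}_i$ (using $q_{ji} = q_{ij}^{-1}$). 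Declaring $\deg x_i = w_i$ on $\OO_{\bfq}(\KK^N)$ gives the grading of (b), and the map $\bar{x}_i \mapsto x_i$ furnishes the graded isomorphism of (c); it is surjective by construction and injective by a dimension count in each degree, since both sides have the standard ordered monomials as a $\KK$-basis with matching weights.

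The main obstacle is the bookkeeping in the multiplicativity/reordering step: one must confirm that \emph{iterated} reordering of a product of two standard monomials — which may require many transpositions and produces cascading correction terms — never raises the weight. This is where the strict gap $w_j > w_1 + \cdots + w_{j-1}$ earns its keep, but writing the induction cleanly (on, say, the number of inversions, or on $N$) requires care, since each correction term is itself a standard monomial on fewer variables that must be re-fed into the argument. I expect this to be routine given the weight choice but to be the step demanding the most attention to detail; everything else follows formally once multiplicativity and the identification of $\gr R$ are established.
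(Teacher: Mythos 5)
The paper itself disposes of this lemma with a one-line citation to Bueso--G\'omez-Torrecillas--Verschoren, so your attempt to give a direct, self-contained construction is welcome and is in spirit the same re-filtering argument that the cited results encapsulate. The overall architecture --- assign positive integer weights $w_i$ to the generators, filter by total weight of standard monomials, show the reordering relation $x_j x_i = q_{ji} x_i x_j + \de_j(x_i)$ preserves the leading term and strictly drops the weight of the correction, then identify $\gr R$ with $\OO_{\bfq}(\KK^N)$ by a PBW-basis count --- is correct and is the right way to prove this.

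However, there is a genuine gap in the one step you flagged as the crux: the claim that any monomial appearing in $\de_j(x_i)$, being supported on indices $<j$, has weight at most $w_1+\cdots+w_{j-1}$. This is false. An element of $R_{j-1}$ is a polynomial in $x_1,\dots,x_{j-1}$ whose monomials can have arbitrarily large exponents, so their weights are of the form $\sum_k a_k w_k$ with the $a_k$ unbounded a priori; they are not controlled by $w_1+\cdots+w_{j-1}$. Consequently a weight sequence chosen independently of the data, such as $w_j = 2^j$, does not work: already for $N=2$ with $\de_2(x_1)=x_1^5$ and $(w_1,w_2)=(2,4)$ the correction term has weight $10 > w_1+w_2=6$. (Note also that local nilpotence of $\de_j$ is not a hypothesis of the lemma and in any case does not bound the degree of $\de_j(x_i)$ in the lower variables.) The fix is to choose the weights \emph{inductively in terms of the actual derivations}: each $\de_j(x_i)$ is a fixed element of $R_{j-1}$ involving finitely many standard monomials, so having chosen $w_1,\dots,w_{j-1}$ you may take $w_j$ strictly larger than the maximum weight of any monomial occurring in any $\de_j(x_i)$, $i<j$; this forces $\Wt\bigl(\de_j(x_i)\bigr) < w_j \le w_i+w_j$ as required. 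With that correction your multiplicativity induction (best run on total weight, so that the strictly lower correction terms are handled by the inductive hypothesis when they are re-fed into further reorderings) and the graded isomorphism in part (c) go through as you describe.
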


\begin{proof} This is an application of \cite[Chapter 2, Corollary 3.3; Chapter 4, Proposition 6.4, Theorem 6.5]{BGTV}.
\end{proof}

\begin{proposition}  \label{ausregcm}
Let $R = \KK[x_1][x_2;\sigma_2,\delta_2]\cdots[x_N;\sigma_N,\delta_N]$ be an iterated skew polynomial algebra as in \eqref{itOre}, and assume that $\sig_j(x_i) \in \kx x_i$ for $1\le i<j\le N$. Then $R$ is an Auslander-regular, Cohen-Macaulay algebra of $GK$-dimension $N$.
\end{proposition}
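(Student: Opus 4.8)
The plan is to prove this by induction on the length $N$ of the iterated skew polynomial extension, establishing all three properties (Auslander-regularity, Cohen-Macaulayness, and $\GK$-dimension $N$) simultaneously so that the inductive hypotheses are available at each step. The base case $N=0$ is trivial: $R_0 = \KK$ is an Auslander-regular, Cohen-Macaulay algebra of $\GK$-dimension $0$. For the inductive step, I would pass from $R_{j-1}$ to $R_j = R_{j-1}[x_j; \sig_j, \de_j]$.

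There are two natural routes, and I would take the one that minimizes calculation. The first route works directly with the skew polynomial extension $R_j = R_{j-1}[x_j;\sig_j,\de_j]$: the standard theory of how Auslander-regularity, the Cohen-Macaulay property, and $\GK$-dimension behave under Ore extensions shows that each of these properties is inherited by $R_{j-1}[x_j;\sig_j,\de_j]$ from $R_{j-1}$, with the $\GK$-dimension increasing by exactly $1$ at each step. Concretely, an Ore extension $S[x;\sig,\de]$ of a noetherian Auslander-regular (resp.\ Cohen-Macaulay) algebra $S$ is again Auslander-regular (resp.\ Cohen-Macaulay), and $\GK(S[x;\sig,\de]) = \GK(S) + 1$ when $\sig$ is an automorphism; these are documented facts about skew polynomial rings. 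Iterating from $R_0 = \KK$ through $R_N = R$ then yields all three conclusions with $\GK(R) = N$.

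The second route, which I would actually prefer for cleanliness, exploits Lemma \ref{filt}. The hypothesis $\sig_j(x_i) \in \kx x_i$ for $i < j$ is precisely what is needed to produce a multiplicatively skew-symmetric matrix $\bfq$, so Lemma \ref{filt} furnishes an exhaustive, ascending, locally finite filtration on $R$ whose associated graded ring $\gr R$ is isomorphic to the quantum affine space $\OO_\bfq(\KK^N)$. Since $\OO_\bfq(\KK^N)$ is an iterated skew polynomial extension of $\KK$ in which all the twisting derivations vanish (it is a pure $q$-commuting polynomial ring), it is readily seen to be Auslander-regular, Cohen-Macaulay, and of $\GK$-dimension $N$ -- this is a well-known computation for quantum affine spaces. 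Then I would invoke the transfer of these homological properties from $\gr R$ to $R$: for a filtration of this type, Auslander-regularity and the Cohen-Macaulay property lift from $\gr R$ to $R$, and $\GK(R) = \GK(\gr R) = N$ because the filtration is locally finite and exhaustive.

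The main obstacle I anticipate is not conceptual but bookkeeping: assembling the correct citations for the behavior of Auslander-regularity and Cohen-Macaulayness under the relevant filtration (or Ore extension), and checking that the precise hypotheses of those transfer theorems -- noetherianness, the filtration being locally finite and exhaustive with $R_0 = \KK$, and $\gr R$ being noetherian -- are all met. The quantum affine space $\OO_\bfq(\KK^N)$ satisfies all the needed regularity and dimension properties by standard results, so the real work is ensuring the lifting theorems apply verbatim in this graded/filtered setting; once that is confirmed, the conclusion is immediate.
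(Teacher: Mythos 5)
Your preferred route (the second one, via Lemma \ref{filt}) is sound and is close to what the paper actually does, but the paper's proof is a hybrid rather than a pure filtered--graded transfer: Auslander-regularity and the value $\GK(R)=N$ are obtained by induction on $N$ directly through the Ore extensions, citing \cite[Theorem 4.2]{Eks} for preservation of Auslander-regularity and \cite[Lemma 2.2]{HK} for the $+1$ jump in GK-dimension, while only the Cohen--Macaulay property is deduced from the filtration of Lemma \ref{filt} via \cite[Theorem 3]{GTLob}. Your second route pushes all three properties through $\gr R \cong \OO_{\bfq}(\KK^N)$, which works: the lifting results for locally finite exhaustive filtrations with $R_0=\KK$ do cover Auslander-regularity and the Cohen--Macaulay property simultaneously (this is essentially the content of \cite{GTLob}), and $\GK(R)=\GK(\gr R)$ holds for such filtrations by standard results in \cite{KrLen}. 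So your preferred argument is a legitimate, arguably more uniform, variant of the paper's.

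One caution about your first route, offered as an alternative: the assertion that the Cohen--Macaulay property (with respect to GK-dimension) is ``a documented fact'' to pass to an arbitrary Ore extension $S[x;\sig,\de]$ is not correct as a blanket statement; no such general transfer theorem is available, and this is precisely why the paper (and \cite{GTLob}) resorts to the associated graded ring for the Cohen--Macaulay part. Similarly, $\GK(S[x;\sig])=\GK(S)+1$ can fail for general automorphisms $\sig$ (the GK-dimension can jump by more than one); the equality used here depends on the hypothesis $\sig_j(x_i)\in\kx x_i$, which guarantees that $\sig_j$ stabilizes a generating subspace, as required in \cite[Lemma 2.2]{HK}. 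If you run the induction, you should restrict it to Auslander-regularity and GK-dimension and still handle Cohen--Macaulayness via the filtration; as written, route one alone does not close.
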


\begin{proof} Auslander-regularity and the GK-dimension value follow by induction on $N$ from \cite[Theorem 4.2]{Eks} and \cite[Lemma 2.2]{HK}. Let $R$ be filtered as in Lemma \ref{filt}, so that $R_0 = \KK$ and $\gr R \cong \OO_{\bfq}(\KK^N)$. Then \cite[Theorem 3]{GTLob} implies that $R$ is Cohen-Macaulay.
\end{proof}

Now we have everything in hand to prove the main theorem.

\begin{proof}[First Proof of Theorem \ref{catCGL}] Clearly $R$ is an affine noetherian $\KK$-algebra domain. It is Auslander-Gorenstein and Cohen-Macaulay with finite GK-dimension by Proposition \ref{ausregcm}, and $\Spec R$ is normally separated by Theorem \ref{normsepCGL}. Therefore by \cite[Theorem 1.6]{GLen}, $\Spec R$ is catenary and Tauvel's height formula holds in $R$.

Now consider a prime ideal $P/Q$ in a prime quotient $R/Q$ of $R$. Due to catenarity in $\Spec R$, we have $\height(P/Q) = \height(P) - \height(Q)$. Taking account of the height formula for $R$, we obtain
\begin{align*}
\GK \bigl( (R/Q)/(P/Q) \bigr) + \height(P/Q) &= \GK(R/P) + \height(P) - \height(Q)  \\
&= \GK(R) - \height(Q) = \GK(R/Q),
\end{align*}
which verifies the height formula in $R/Q$.
\end{proof}

\begin{proof}[Second Proof of Theorem \ref{catCGL}] Catenarity follows from \cite[Theorem 0.1]{YZ}, whose hypotheses are verified as follows. (1) Normal separation is given by Theorem \ref{normsepCGL}. (2) If $R$ is filtered as in Lemma \ref{filt}, then $\gr R$ is graded isomorphic to $\OO_{\bfq}(\KK^N)$, which is clearly noetherian and connected graded. Moreover, $\OO_{\bfq}(\KK^N)$ has enough normal elements in the sense of \cite{YZ}, since if $P$ is a graded prime ideal of $\OO_{\bfq}(\KK^N)$ with $\OO_{\bfq}(\KK^N)/P \ne \KK$, then some $x_j \notin P$,  whence $x_j+P$ is a nonzero homogeneous normal element of $\OO_{\bfq}(\KK^N)/P$ with positive degree.

Tauvel's height formula for $R$ follows from \cite[Theorem 2.23]{YZ} or \cite[Theorem 7.1]{GLL}, and then the height formula may be established for prime quotients of $R$ as in the first proof.
\end{proof}

%%%%%%%%%%%%%%%%%%%%%% References %%%%%%%%%%%%%%%%%%%%%%%%%%%%%%%%%%%%%%%

%%%%%%%%%%%%%%%%%%%%%%%%%%%%%%%%%%%%%%%%%%%%%%%%%%%%%%%%%%%%%%%%%%%%%%%%%%%%%%%

\begin{thebibliography}{99}
%%%%%%%%%%%%%%%%%%%%%%%%%%%%%%%%%%%%%%%%%%%%%%

\bibitem{BGTV} J.L. Bueso, J. G\'omez-Torrecillas, and A. Verschoren, \emph{Algorithmic Methods in Non-Commutative Algebra. Applications to Quantum Groups}, Kluwer (2003) Dordrecht.

   \bibitem{Ca1} G. Cauchon, {\emph{Effacement des d\'erivations et spectres
premiers d'alg\`ebres quantiques,}} J. Algebra {\textbf{260}} (2003), 476--518.

\bibitem{Ca2} G. Cauchon, \emph{Spectre premier de $O_q(M_n(k))$. Image canonique et s\'eparation normale}, 
J. Algebra \textbf{260} (2003), 519--569.

\bibitem{Eks} E.K. Ekstr\"om, \emph{ The Auslander condition on graded and filtered Noetherian rings}, in
Seminaire Dubreil-Malliavin, 1987-1988, Lecture Notes in Mathematics, Vol. 1404 Berlin (1989) Springer, pp. 220--245.

\bibitem{Gab} O. Gabber, \emph{Equidimensionalit\'e de la vari\'et\'e
caract\'eristique}, Expos\'e de O. Gabber r\'edig\'e par T. Levasseur, Universit\'e de Paris VI, (1982).

\bibitem{GTLob} J. G\'omez-Torrecillas and F.J. Lobillo, \emph{Auslander-regular and Cohen-Macaulay quantum groups}, Algebras and Rep. Theory \textbf{7} (2004), 35--42.

\bibitem{qskew} K.R. Goodearl, \emph{Prime ideals in skew polynomial rings and quantized Weyl algebras}, J. Algebra \textbf{150} (1992), 324--377.

\bibitem{murcia.proc} K.R. Goodearl, \emph{Prime spectra of quantized coordinate
rings}, in Interactions between Ring Theory and
Representations of Algebras (Murcia 1998), (F.
Van Oystaeyen and M. Saor\'\i n, eds.), New York (2000), Dekker, pp. 205--237.

 \bibitem{gll1} K.R. Goodearl, S. Launois, and T.H. Lenagan, {\em Totally nonnegative cells and matrix Poisson varieties}, Advances in Math. \textbf{226} (2011), 779--826.

\bibitem{gll2} K.R. Goodearl, S. Launois, and T.H. Lenagan, {\em Torus-invariant prime ideals in quantum matrices, totally nonnegative cells and symplectic leaves},  Math. Zeitschrift \textbf{269} (2011), 29--45.

\bibitem{GLL} K.R. Goodearl, S. Launois, and T.H. Lenagan, \emph{Tauvel's height formula for 
quantum nilpotent algebras}, Communic. in Algebra \textbf{47} (2019), 4194--4209.

\bibitem{GLen} K.R. Goodearl and T.H. Lenagan, \emph{Catenarity in quantum algebras}, 
J. Pure Applied Algebra \textbf{111} (1996), 123--142.
   
\bibitem{GL} K.R. Goodearl and E.S. Letzter, \emph{The Dixmier--Moeglin 
equivalence in quantum coordinate rings and quantized Weyl algebras}, Trans. Amer. 
Math. Soc. {\textbf{352}} (2000), 1381--1403.  

\bibitem{GYncufd} K.R. Goodearl and M.T. Yakimov, \emph{From quantum Ore extensions to quantum tori via noncommutative UFDs}, Adv. Math. \textbf{300} (2015), 672--716. 

\bibitem{GZ} K.R. Goodearl and J.J. Zhang, \emph{Homological properties of quantized coordinate rings of semisimple groups}, Proc. Lond. Math. Soc. (3) \textbf{94} (2007), 647--671.

\bibitem{Hrt} K.L. Horton, \emph{The prime and primitive spectra of
multiparameter quantum symplectic and Euclidean spaces}, Communic. in Algebra \textbf{31} (2003), 2713--2743.

\bibitem{HK} C. Huh and C.O. Kim, \emph{Gelfand-Kirillov dimension of skew polynomial rings of automorphism type}, Communic. in Algebra \textbf{24} (1996), 2317--2323.

\bibitem{KrLen} G. Krause and T.H. Lenagan, \emph{Growth of Algebras and Gelfand-Kirillov Dimension}, Pitman (1985) Boston.

\bibitem{LLN} S. Launois, T.H. Lenagan, and B. Nolan, \emph{Total positivity is a quantum phenomenon: the grassmannian case}, arXiv:1906.06199.
  
   \bibitem{LLR} S. Launois, T.H. Lenagan, and L. Rigal, {\emph{Quantum unique factorisation domains}}, 
J. London Math. Soc. (2) {\textbf{74}} (2006), 321--340.

\bibitem{LLR2} S. Launois, T.H. Lenagan and L. Rigal, 
\emph{Prime ideals in the quantum grassmannian},
Selecta Math. (N.S.) \textbf{13} (2008), 697--725.

\bibitem{Len} T.H. Lenagan, \emph{Enveloping algebras of solvable Lie superalgebras are catenary}, Contemp. Math. \textbf{130} (1992), 231--236.

\bibitem{LevSt} T. Levasseur and J.T. Stafford, \emph{Rings of differential operators on classical rings of invariants}, Memoirs Amer. Math. Soc. No. 412 (1989).

\bibitem{Oh1} S.-Q. Oh, \emph{Catenarity in a class of iterated skew polynomial rings}, Communic. in Algebra \textbf{25} (1997), 37--49.

\bibitem{Ric} L. Richard, {\emph{Equivalence rationnelle et homologie de Hochschild pour certaines alg\`ebres polynomiales
classiques et quantiques}}, Th\'ese de doctorat, Universit\'e
Blaise Pascal (Clermont 2), (2002).

\bibitem{Sch} W. Schelter, \emph{Affine PI rings are catenary}, Bull. Amer. Math. Soc. \textbf{83} (1977), 1309--1310.

\bibitem{tauvel} P. Tauvel, \emph{Sur les quotients premiers de l'alg\`ebre enveloppante d'un alg\`ebre de Lie r\'esoluble}, Bull.
Soc. Math. France \textbf{106} (1978), 177--205.

\bibitem{Yak1} M. Yakimov, \emph{A proof of the Goodearl-Lenagan polynormality conjecture}, Int. Math. Res. Notices \textbf{9} (2013), 2097--2132.

\bibitem{Yak2} M. Yakimov, \emph{Spectra and catenarity of multi-parameter quantum Schubert cells}, Glasgow Math. J. \textbf{55A} (2013), 169--194.

\bibitem{Yak3} M. Yakimov, \emph{On the spectra of quantum groups}, Memoirs Amer. Math. Soc. No. 1078, \textbf{229} (2014).

\bibitem{YZ} A. Yekutieli and J.J. Zhang, \emph{Rings with Auslander dualizing complexes}, J. Algebra \textbf{213} (1999), 1--51.
   

%%%%%%%%%%%%%%%%%%%%%%%%%%%%%%%%%
\end{thebibliography}
\end{document}